\def \N {\mathbb{N}}
\def \R {\mathbb{R}}
\theoremstyle{definition}
\newtheorem{definition}{Definition}[section]
\theoremstyle{plain}
\newtheorem{theorem}[definition]{Theorem}
\newtheorem{proposition}[definition]{Proposition}
\newtheorem{lemma}[definition]{Lemma}
\numberwithin{equation}{section}
\renewcommand{\epsilon}{\varepsilon}
\renewcommand{\leq}{\leqslant}
\renewcommand{\le}{\leqslant}
\renewcommand{\geq}{\geqslant}
\renewcommand{\ge}{\geqslant}
 \title[Maximum principles and spectral analysis]{Maximum principles and spectral analysis for the superposition of operators of fractional order}
\author[S. Dipierro, E. Proietti Lippi, C. Sportelli and E. Valdinoci]{Serena Dipierro, Edoardo Proietti Lippi, Caterina Sportelli and Enrico Valdinoci}
\address{Serena Dipierro: Department of Mathematics and Statistics, The University of Western Australia, 35 Stirling Highway, Crawley, Perth, WA 6009, Australia}
\email{serena.dipierro@uwa.edu.au}
\address{Edoardo Proietti Lippi: Department of Mathematics and Statistics, The University of Western Australia, 35 Stirling Highway, Crawley, Perth, WA 6009, Australia}
\email{edoardo.proiettilippi@uwa.edu.au}
\address{Caterina Sportelli: Departamento de Análisis Matemático, Universidad de Granada, 18071 Granada, Spain \\\&\\ Department of Mathematics and Statistics, The University of Western Australia, 35 Stirling Highway, Crawley, Perth, WA 6009, Australia}
\email{caterina.sportelli@uwa.edu.au, caterina.sp@ugr.es}
\address{Enrico Valdinoci: Department of Mathematics and Statistics, The University of Western Australia, 35 Stirling Highway, Crawley, Perth, WA 6009, Australia}
\email{enrico.valdinoci@uwa.edu.au}
\begin{document}

\maketitle

\begin{abstract}
We consider a ``superposition operator" obtained through the continuous superposition of operators of mixed fractional order,  modulated by a signed finite Borel measure defined over the set $[0, 1]$.  The relevance of this operator is rooted in the fact that it incorporates special and significant cases of interest, like the mixed operator $-\Delta + (-\Delta)^s$, the (possibly) infinite sum of fractional Laplacians and allows to consider operators carrying a ``wrong sign".

We first outline weak and strong maximum principles for this type of operators. Then, we complete the spectral analysis for the related Dirichlet eigenvalue problem started in~\cite{DPLSVlog}.
\end{abstract}

\begin{figure}[H]
\centering\captionsetup{labelformat=empty}
\includegraphics[width=0.75\textwidth]{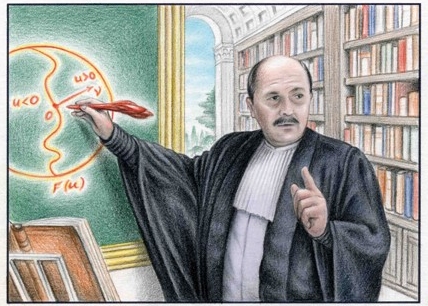}
\caption{Portrait of Sandro by Luigi Serafini.}
\end{figure}

\tableofcontents

\section{Introduction}
\subsection{Why superposition operators?}
Superposition operators naturally arise in the study of anomalous diffusion, with a motivating example coming from population dynamics. Suppose that
a population consists of different groups of individuals, where each group follows a distinct diffusion pattern---such as a Gaussian process or a Lévy flight with varying diffusive exponents. The overall displacement of the population is then governed by an evolution equation in which the diffusion operator is a linear combination of the various operators describing each group's diffusion pattern (see~\cite{MR4249816}).

Superposition operators, where the coefficients of these linear combinations are allowed to change sign, are also useful in describing population density patterns. Positive coefficients correspond to diffusive phenomena, while negative contributions model ``reverse parabolic'' features, which help describe population concentration (see~\cite{ZZLIB, DPLSVlog}.

In this paper we carry on the study of operators derived from the (both finite and infinite,
both discrete and continuous) superposition of operators of mixed fractional order, focusing on two key topics: the validity of some maximum principles and the analysis of the spectral properties of this kind of operators.

\subsection{The mathematical setting}
The superposition of operators that we consider takes the form
\begin{equation}\label{superpositionoperator}
L_\mu u:=\int_{[0, 1]} (-\Delta)^s u\, d\mu(s),
\end{equation}
where $\mu$ denotes the signed measure of the form
\begin{equation}\label{misure+-}
\mu=\mu^+ -\mu^-.
\end{equation}
Here above, $\mu^+$ and~$\mu^-$ are assumed to be two nonnegative finite Borel measures over the interval $[0, 1]$. 
Also, in~\eqref{superpositionoperator}, the notation~$(-\Delta)^s$ stands for the fractional Laplacian, defined, for all~$s\in(0,1)$, as
\begin{equation}\label{AMMU0} (- \Delta)^s\, u(x) = c_{N,s}\int_{\R^N} \frac{2u(x) - u(x+y)-u(x-y)}{|y|^{N+2s}}\, dy.
\end{equation}
The positive normalizing constant~$c_{N,s}$, defined by
\begin{equation}\label{defcNs}
c_{N,s}:=\frac{2^{2s-1}\,\Gamma\left(\frac{N+2s}{2}\right)}{\pi^{N/2}\,\Gamma(2-s)}\, s(1-s),
\end{equation}
being~$\Gamma$ the Gamma function,
is chosen in such a way that, for~$u$ smooth and rapidly decaying, the Fourier transform of~$(-\Delta)^s u$ returns~$(2\pi|\xi|)^{2s}$ times the Fourier transform of~$u$ and provides consistent limits as~$s\nearrow1$ and as~$s\searrow0$, namely
\[
\lim_{s\nearrow1}(-\Delta)^su=(-\Delta)^1u=-\Delta u
\qquad{\mbox{and}}\qquad
\lim_{s\searrow0}(-\Delta)^s u=(-\Delta)^0u=u.
\]
We refer the interested reader to~\cite{MR2944369, 2024arXiv241118238A} for a detailed presentation of the fractional Laplace operator.

The setting in which we operate is the same as the one originally introduced in~\cite{MR4736013}.  In~\cite{DPSV2}, that framework has been further broadened to the superposition of fractional~$p$-Laplacians (see also~\cite{MR4793906}). In addition, the case of fractional $p$-Laplacians of different orders, namely for~$s\in[0,1]$ and~$p\in (1,N)$, has been considered in~\cite{TUTTI3}, whereas the superposition of fractional operators under Neumann boundary conditions has been examined in~\cite{TUTTI, TUTTI2}. 

Now we introduce our setting with all the details. We let~$\mu$ be as in~\eqref{misure+-}.
The components of the measure tied to higher fractional exponents are supposed to be positive, whereas the negative components (if any) must be appropriately reabsorbed into the positive ones.
This is formally expressed by requiring the existence of~$\overline s \in (0, 1]$ and~$\gamma\ge 0$ (to be chosen small enough) such that the following properties are satisfied:
\begin{align}\label{mu0}
&\mu^+\big([\overline s, 1]\big)>0,\\ \label{mu1}
&\mu^-\big([\overline s, 1]\big)=0,\\ \label{mu2}
\mbox{ and } \quad&\mu^-\big([0, \overline s)\big)\le\gamma\mu^+\big([\overline s, 1]\big).
\end{align}

The exponent $\overline s$ will serve as a Sobolev critical exponent, its role being enhanced in the upcoming Proposition~\ref{embeddingsXOmega}.

We now delve into the two key topics of the paper: the validity of a maximum principle and the analysis of the spectral properties of the operator~\eqref{superpositionoperator}.

\subsection{Maximum principles}
We point out that, if the measure in~\eqref{misure+-} comes with no negative contributions, that is~$\mu^- \equiv 0$, then the operator in~\eqref{superpositionoperator} boils down to
\begin{equation}\label{superpositionoperator+}
L_\mu^+ u:=\int_{[0, 1]} (-\Delta)^s u\, d\mu^+(s).
\end{equation}
Our first set of results deals with maximum principles for the operator~$L^+_\mu$ in~\eqref{superpositionoperator+}. We present different formulations, depending on whether weak or classical solutions are being sought.

To this aim, we take~$N>2 \overline s$ and we introduce the space~$X(\R^N)$,
which is given by the completion of compactly supported smooth functions
with respect to a specific norm which takes into account the presence of the measure~$\mu^+$ (see the forthcoming definition in~\eqref{normadefinizione}).
The weak maximum principles for the operator~$L^+_\mu$ reads as follows:

\begin{theorem}\label{weakmaxprinciple}
Let~$\Omega$ be an open and bounded subset of~$\R^N$
with Lipschitz boundary. Let~$\mu$ satisfy~\eqref{mu0}.

Let~$u\in X(\R^N)$ be such that~$L^+_\mu u\geq 0 $ in~$\Omega$
in the weak sense. Suppose\footnote{When $\mu^+$ is the Dirac delta\label{FOO:ABNO5}
at $1$ (or, more generally, if $\mu^+([0,1))=0$) the space $X^+(\Omega)$ coincides with $H^1(\Omega)$, with $u \ge 0$ on $\partial\Omega$ understood in the classical trace sense.} that~$u\ge 0$ a.e. in~$\R^N\setminus \Omega$.

Then, $u\geq 0$ a.e. in~$\Omega$.
\end{theorem}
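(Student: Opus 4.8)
The plan is to run a Stampacchia-type argument by testing the weak inequality against the negative part of~$u$. Write $u=u^{+}-u^{-}$, with $u^{\pm}:=\max\{\pm u,0\}$, and let
\[
\mathcal B(v,\varphi):=\int_{[0,1]}\mathcal E_{s}(v,\varphi)\,d\mu^{+}(s)
\]
be the bilinear form associated with~$L^{+}_{\mu}$, where $\mathcal E_{s}$ is the Dirichlet form of $(-\Delta)^{s}$: the Gagliardo form $\mathcal E_{s}(v,\varphi)=\tfrac{c_{N,s}}{2}\iint_{\R^{N}\times\R^{N}}\frac{(v(x)-v(y))(\varphi(x)-\varphi(y))}{|x-y|^{N+2s}}\,dx\,dy$ for $s\in(0,1)$, with the usual conventions $\mathcal E_{1}(v,\varphi)=\int_{\R^{N}}\nabla v\cdot\nabla\varphi\,dx$ and $\mathcal E_{0}(v,\varphi)=\int_{\R^{N}}v\,\varphi\,dx$ at the endpoints. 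By definition, $L^{+}_{\mu}u\ge0$ in~$\Omega$ in the weak sense means $\mathcal B(u,\varphi)\ge0$ for every nonnegative $\varphi\in X(\R^{N})$ that vanishes outside~$\Omega$. The first thing to verify is that $u^{-}$ belongs to this class of test functions: from $|u^{-}(x)-u^{-}(y)|\le|u(x)-u(y)|$ for all $x,y$, the seminorms entering the norm in~\eqref{normadefinizione} do not increase under the truncation $u\mapsto u^{-}$, so $u^{-}\in X(\R^{N})$; and $u\ge0$ a.e.\ in~$\R^{N}\setminus\Omega$ forces $u^{-}=0$ a.e.\ there (read in the classical trace sense when $\mu^{+}((0,1))=0$). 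Hence $u^{-}$ is an admissible, nonnegative test function.

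Taking $\varphi=u^{-}$ and expanding by bilinearity gives
\[
0\;\le\;\mathcal B(u,u^{-})\;=\;\mathcal B(u^{+},u^{-})-\mathcal B(u^{-},u^{-}).
\]
The key is to control the sign of each piece through the sign of the integrand. For every $s\in(0,1)$ one has $(u^{+}(x)-u^{+}(y))(u^{-}(x)-u^{-}(y))\le0$ for all $x,y$ (check the four cases according to the signs of $u(x)$ and $u(y)$, using $u^{+}u^{-}\equiv0$), hence $\mathcal E_{s}(u^{+},u^{-})\le0$; the same holds at $s=1$ since $\nabla u^{+}\cdot\nabla u^{-}=0$ a.e., and at $s=0$ since $u^{+}u^{-}\equiv0$. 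On the other hand $\mathcal E_{s}(u^{-},u^{-})\ge0$ for every $s\in[0,1]$. Integrating these relations against~$\mu^{+}$ (legitimate by Tonelli, each term having a fixed sign) yields $\mathcal B(u^{+},u^{-})\le0$ and $\mathcal B(u^{-},u^{-})\ge0$, so the displayed chain forces
\[
\int_{[0,1]}\mathcal E_{s}(u^{-},u^{-})\,d\mu^{+}(s)=0 .
\]

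Since the integrand is nonnegative, $\mathcal E_{s}(u^{-},u^{-})=0$ for $\mu^{+}$-a.e.\ $s$. By~\eqref{mu0}, the set $[\overline s,1]$ carries positive $\mu^{+}$-mass, so there is $s\in[\overline s,1]\subset(0,1]$ with $\mathcal E_{s}(u^{-},u^{-})=0$; for such an $s>0$ this says that the Gagliardo seminorm of $u^{-}$ (or, when $s=1$, the $L^{2}$ norm of $\nabla u^{-}$) vanishes, whence $u^{-}$ is a.e.\ equal to a constant on~$\R^{N}$. Because $u^{-}$ vanishes on $\R^{N}\setminus\Omega$, a set of positive measure since $\Omega$ is bounded — equivalently, because $X(\R^{N})$ embeds into a Lebesgue space by Proposition~\ref{embeddingsXOmega}, or by the trace condition in the degenerate case — that constant must be~$0$. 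Therefore $u^{-}\equiv0$, i.e.\ $u\ge0$ a.e.\ in~$\R^{N}$, in particular in~$\Omega$. The only genuinely delicate point in this scheme is the very first one, namely that $u^{-}$ is an admissible test function — that is, the stability of $X(\R^{N})$ under $u\mapsto u^{-}$ and the fact that this truncation lies in the class of functions against which the weak inequality is assumed — together with the need to treat the endpoint exponents $s\in\{0,1\}$ uniformly, which is exactly where the trace reading of the exterior datum enters.
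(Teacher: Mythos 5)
Your proposal is correct and follows essentially the same route as the paper: test the weak inequality with $u^-$ (admissible since $|u^-(x)-u^-(y)|\le|u(x)-u(y)|$ and $u^-\equiv0$ outside $\Omega$), use the sign of the mixed term $(u^+(x)-u^+(y))(u^-(x)-u^-(y))\le0$ to force the $\mu^+$-energy of $u^-$ to vanish, and then exploit~\eqref{mu0} to conclude $u^-\equiv0$. The only difference is cosmetic: the paper argues by contradiction and splits the endgame into the cases $\mu^+(\{1\})>0$ (trace argument) and $\mu^+(\{1\})=0$ (strict positivity of the off-diagonal integral over $E\times(\R^N\setminus\Omega)$), whereas you pick a single $s\in[\overline s,1]$ with vanishing seminorm and conclude constancy directly, handling the degenerate trace case in passing as the paper does.
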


For classical solutions, the previous statement is enhanced in this way:

\begin{theorem}\label{strongmaximumprinciple}
Let~$\Omega$ be an open and bounded subset of~$\R^N$
with Lipschitz boundary. Let~$\mu$ satisfy~\eqref{mu0}.

Let~$u\in C(\R^N)\cap C^2(\Omega)$.

If~$\mu^+((0,1))>0$, assume also that
\[
\int_{[0,1]}c_{N,s} \int_{\R^N}\frac{|u(x)|}{1+|x|^{N+2s}}\,dx\,d\mu^+(s)<+\infty.
\]
Suppose that 
\[
\begin{cases}
L^+_\mu u \ge 0 &\mbox{in } \Omega,\\
u\ge 0 &\mbox{in } \R^N\setminus \Omega.
\end{cases}
\]

Then,
\begin{equation}\label{2uge0Omega}
u\ge 0\quad\mbox{ in } \Omega.
\end{equation}

In addition, if~$u(x_0)=0$ for some~$x_0\in \Omega$, then~$u\equiv 0$ in the connected component
of~$\Omega $ containing~$x_0$, and in fact in the whole of~$\R^N$
provided that~$\mu^+((0,1))>0$.
\end{theorem}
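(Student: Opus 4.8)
The plan is to treat the two assertions of Theorem~\ref{strongmaximumprinciple} separately, proving the positivity statement~\eqref{2uge0Omega} first and then the strong (tangency) part. For the positivity statement, I would argue by contradiction, assuming that the open set $\{u<0\}\cap\Omega$ is nonempty. The idea is to pick a point~$x_0\in\Omega$ at which~$u$ attains a negative minimum over~$\overline\Omega$; such a point exists by continuity of~$u$ on the compact set~$\overline\Omega$ and because~$u\ge0$ on~$\partial\Omega$ (which follows from~$u\ge0$ in~$\R^N\setminus\Omega$ and continuity). At such an interior minimum, for each~$s\in(0,1)$ the quantity $2u(x_0)-u(x_0+y)-u(x_0-y)$ is~$\le0$ for all~$y$ (since~$u(x_0)\le u(x_0\pm y)$), so $(-\Delta)^s u(x_0)\le0$; moreover it is \emph{strictly} negative for~$s\in(0,1)$ unless~$u$ is constant, because~$u$ cannot equal~$u(x_0)$ on all of~$\R^N$ (it is nonnegative somewhere and negative at~$x_0$). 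For~$s=1$, $(-\Delta)^1u(x_0)=-\Delta u(x_0)\le0$ by the classical second-derivative test, using~$u\in C^2(\Omega)$. The integrability hypothesis in the case~$\mu^+((0,1))>0$ is exactly what makes~$\int_{[0,1]}(-\Delta)^s u(x_0)\,d\mu^+(s)$ well defined and finite. Assumption~\eqref{mu0} guarantees~$\mu^+$ charges an interval~$[\overline s,1]$ with~$\overline s>0$, so either~$\mu^+$ has mass in~$(0,1)$—in which case the strictly negative contribution from those~$s$ forces $L^+_\mu u(x_0)<0$, contradicting $L^+_\mu u\ge0$ in~$\Omega$—or~$\mu^+$ is a point mass at~$s=1$, reducing the statement to the classical weak maximum principle for~$-\Delta$ (and then the footnote interpretation applies). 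I would dispatch both cases to reach the contradiction, hence~$u\ge0$ in~$\Omega$.

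For the strong part, suppose~$u(x_0)=0$ at some interior point~$x_0\in\Omega$. Having established~$u\ge0$ in~$\Omega$ and~$u\ge0$ in~$\R^N\setminus\Omega$, we know~$u\ge0$ on all of~$\R^N$, so~$x_0$ is a global minimum point of~$u$. Evaluating~$L^+_\mu u$ at~$x_0$: since~$u(x_0)=0\le u(x_0\pm y)$, each integrand $2u(x_0)-u(x_0+y)-u(x_0-y)=-u(x_0+y)-u(x_0-y)\le0$, so $(-\Delta)^s u(x_0)\le0$ for~$s\in(0,1)$ and $-\Delta u(x_0)\le0$; on the other hand $L^+_\mu u(x_0)\ge0$. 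Hence the integral of nonpositive quantities against the nonnegative measure~$\mu^+$ is both~$\le0$ and~$\ge0$, forcing $(-\Delta)^s u(x_0)=0$ for~$\mu^+$-a.e.~$s$. By~\eqref{mu0}, $\mu^+$ charges~$[\overline s,1]$; pick~$s_\sharp\in[\overline s,1]$ with $\mu^+([s_\sharp,1])>0$ as in~\eqref{scritico}. If~$s_\sharp<1$, or more generally if $\mu^+((0,1))>0$, then there is some~$s^*\in(0,1)$ in the support with $(-\Delta)^{s^*}u(x_0)=0$; since the integrand is nonpositive and continuous in~$y$, this forces $u(x_0+y)+u(x_0-y)\equiv0$ for a.e.~$y\in\R^N$, and by nonnegativity~$u\equiv0$ on~$\R^N$. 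If instead~$\mu^+$ is concentrated at~$s=1$, then $-\Delta u(x_0)=0$ together with $u\ge0$, $u(x_0)=0$ and the classical strong maximum principle for~$-\Delta$ (Hopf) give~$u\equiv0$ in the connected component of~$\Omega$ containing~$x_0$; the extension to all of~$\R^N$ is not claimed in this case (consistently with the statement, which only asserts the global conclusion when~$\mu^+((0,1))>0$).

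A technical point I would be careful about is the regularity needed to make~$(-\Delta)^s u(x_0)$ meaningful for~$s\in(0,1)$: $u\in C^2$ near~$x_0$ controls the singular part of the integral in~\eqref{AMMU0} near~$y=0$, while the stated integrability hypothesis controls the tail; together they justify the pointwise evaluation and the use of Fubini to interchange~$\int_{[0,1]}d\mu^+(s)$ with~$\int_{\R^N}dy$. I would also need to argue that ``$(-\Delta)^s u(x_0)<0$ strictly'' whenever~$u$ is not constantly equal to its minimum value: this follows because the nonpositive integrand $2u(x_0)-u(x_0+y)-u(x_0-y)$ vanishes identically in~$y$ only if~$u\equiv u(x_0)$, and the weight $c_{N,s}|y|^{-N-2s}$ is strictly positive.

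The main obstacle I expect is the bookkeeping of the degenerate case $\mu^+((0,1))=0$, i.e.\ when~$\mu^+$ is a point mass at~$s=1$ (allowed since~$\overline s$ may equal~$1$). There the fractional machinery collapses and one must quote the classical maximum principles for~$-\Delta$ and reconcile the boundary condition with the footnote's trace interpretation; the argument is routine but must be stated cleanly to cover all cases uniformly, and one must also make sure that the case $\mu^+((0,1))>0$ genuinely produces a \emph{strictly} negative contribution at an interior negative minimum (rather than merely nonpositive), which is where the positivity of~$c_{N,s}$ and the ``not identically constant'' observation are essential.
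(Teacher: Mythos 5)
Your proposal is correct and follows essentially the same route as the paper: reduce to the case $\mu^+((0,1))>0$ (handling the purely local case by the classical maximum principle), evaluate $L^+_\mu u$ at an interior global minimum where every contribution $(-\Delta)^s u(x_0)$, $-\Delta u(x_0)$ and $\mu^+(\{0\})u(x_0)$ is nonpositive, and extract a contradiction from the fact that the fractional contributions vanish only if $u$ is constant. The only cosmetic difference is that the paper packages both conclusions into a single claim (``$u$ cannot have an interior minimum unless it is constant'') and derives the strictness by exchanging the order of integration, whereas you argue strict negativity of each $(-\Delta)^s u(x_0)$ directly; the substance is identical.
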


We stress that Theorem~\ref{weakmaxprinciple} and the statement in~\eqref{2uge0Omega} differ in the assumptions required on the function~$u$, which is assumed to belong to~$X(\R^N)$ in the first case, while a ``classical" setting is required in the statement of Theorem~\ref{strongmaximumprinciple}.

\subsection{Failure of the maximum principle}
We point out that the validity of Theorems~\ref{weakmaxprinciple} and~\ref{strongmaximumprinciple} is heavily impacted by the choice of the leading operator. One might be tempted to consider the more general operator~$L_\mu$ defined in~\eqref{superpositionoperator} in place of~$L^+_\mu$ in the statements of Theorems~\ref{weakmaxprinciple} and~\ref{strongmaximumprinciple}.
Unfortunately, this is not possible. 
Indeed, a classical result by Bony, Courr\`ege and Priouret
(see~\cite[Theorem~3, page~391]{MR245085}) states that for linear translation operators the maximum principle holds if and only if the measure~$\mu$ in the operator in~\eqref{superpositionoperator} has constant sign, that is it holds by taking either~$\mu^+\equiv 0$ or~$\mu^-\equiv 0$ in~\eqref{misure+-}.

For the sake of completeness, we exhibit an explicit counterexample which disproves the validity of the maximum principle in the presence of a negative contribution in the measure~$\mu$.
More precisely, we will show that, for any~$s\in(0,1)$ and~$\alpha\in(0,+\infty)$,
\begin{equation}\label{counterexamplemaxprinciple}
{\mbox{the maximum principle does not hold for the operator~$-\Delta - \alpha(-\Delta)^s$.}}
\end{equation}
To this aim, we will take~$\Omega:=(-1,1)$ and construct a function~$u:\R\to \R$ such that~$u\in C(\R)\cap C^2(\Omega)$ and
\[
-\Delta u - \alpha(-\Delta)^s u \geq 0 \quad\mbox{in } \Omega\quad \mbox{and}\quad u\ge 0 \quad \mbox{in } \R\setminus \Omega,
\]
but~$u(x)<0$ for any~$x\in \Omega$.

Notice that the operator introduced in~\eqref{counterexamplemaxprinciple} corresponds to the choice~$\mu:=\delta_1 -\alpha\delta_s$, being~$\delta_1$ and~$\delta_s$ the Dirac measures centered at the points~$1$ and~$s\in (0, 1)$, respectively,
thus the operator~$-\Delta - \alpha(-\Delta)^s$ occurs as a particular case of~$L_\mu$.

We also point out that the assumptions in~\eqref{mu0} and~\eqref{mu1}
for~$-\Delta - \alpha(-\Delta)^s$ are satisfied, and, by taking~$\alpha$ sufficiently small, the assumption in~\eqref{mu2}
can be fulfilled as well. This says that not even a ``reabsorbing'' property
can make the maximum principle hold true.

The proof of~\eqref{counterexamplemaxprinciple} will be presented in Appendix~\ref{appendixmaxprinciple}.
Also, we refer the interested reader to~\cite[Appendix~A]{MR4387204} for a counterexample in which the ``wrong'' sign occurs in the Laplacian and the maximum principle is proved to fail for
the operator~$\Delta+(-\Delta)^s$
for any~$s\in (0, 1/2)$.

\subsection{Eigenvalues and eigenfunctions}
The other topic that we tackle in this paper is the Dirichlet eigenvalue problem driven by the operator $L_\mu$.  
To this aim, we let~$\lambda\in\R$ and we take into account the problem
\begin{equation}\label{eigenvalueproblem}
\begin{cases}
L_\mu u= \lambda u &\mbox{ in } \Omega,\\
u=0 &\mbox{ in } \R^N\setminus\Omega.
\end{cases}
\end{equation}

The existence of the first positive eigenvalue of problem~\eqref{eigenvalueproblem}, as well as its properties, have been established in~\cite[Theorem~1.1]{DPLSVlog}.
Here, we complete the spectral analysis of the operator $L_\mu$
by constructing a diverging sequence of eigenvalues~$\lambda_k$ for the problem~\eqref{eigenvalueproblem} such that each eigenvalue has finite multiplicity. 
In addition, the sequence of eigenfunctions~$e_k$ corresponding to~$\lambda_k$ establishes an orthonormal basis for~$L^2(\Omega)$ and an orthogonal basis for the space~$X(\Omega)$, which is defined as the set of functions which belong to~$X(\R^N)$ and vanish outside~$\Omega$.

To state our result, we denote by~$[u]_s$ the Gagliardo semi-norm of~$u$, with the
slight abuse of notation that~$[u]_0=\|u\|_{L^2(\Omega)}$ and~$[u]_1=\|\nabla u\|_{L^2(\Omega)}$ (see
the forthcoming formula~\eqref{seminormgagliardo}).
Moreover, we point out that we write
\begin{equation}\label{ocmrgexpresfyrj5784i}
\int_{[0,1]}c_{N,s}\iint_{\R^{2N}}\frac{(u(x)-u(y))(v(x)-v(y))}{|x-y|^{N+2s}}\,dx\,dy\,d\mu^+(s)\end{equation}
with an abuse of notation. Indeed, to be precise, one should write \label{footimpr2}
\begin{eqnarray*}&&
\int_{(0,1)}c_{N,s}\iint_{\R^{2N}}\frac{(u(x)-u(y))(v(x)-v(y))}{|x-y|^{N+2s}}\,dx\,dy\,d\mu^+(s)
\\&&\qquad\qquad+\mu^+(\{0\})\int_{\Omega}u(x)v(x)\,dx
+\mu^+(\{1\})\int_{\Omega}\nabla u(x)\cdot \nabla v(x)\,dx.
\end{eqnarray*}
We have the following result:

\begin{theorem}\label{thmautovalori}
Let $\mu$ satisfy~\eqref{mu0}, \eqref{mu1} and~\eqref{mu2} and take~$N>2\overline s$.
Let~$\Omega$ be an open and bounded subset of~$\R^N$ with Lipschitz boundary.

Then, there exists $\gamma_0>0$, depending only on $N$ and $\Omega$, such that if $\gamma\in[0,\gamma_0]$ the following statements hold true.

There exists a sequence~$\lambda_k$ of eigenvalues of problem~\eqref{eigenvalueproblem} such that~$0<\lambda_1 \le \lambda_2\le \cdots\le \lambda_k\le\cdots$, and
\[
\lim_{k\to +\infty} \lambda_k = +\infty.
\]

Moreover,
for any~$k\in \N$, the eigenvalues~$\lambda_{k+1}$
are given recursively by
\begin{equation}\label{kautovaloremisto}
\lambda_{k+1}=\min_{u\in E_{k+1}\setminus \{0\}} \dfrac{\displaystyle\int_{[0,1]}[u]_s^2\,d\mu^+(s)-\int_{[0, \overline s)} [u]^2_s d\mu^-(s)}{\|u\|^2_{L^2(\Omega)}},
\end{equation}
where
where~$E_1:= X(\Omega)$ and, for all~$k\ge1$,
\begin{eqnarray*}
E_{k+1}&:=&\Bigg\{ u\in X(\Omega) \;{\mbox{ s.t. for all }} j=1,\dots,k
\\&&\qquad \int_{[0,1]} c_{N, s}\iint_{\R^{2N}}\dfrac{(u(x)-u(y))(e_j(x)-e_j(y))}{|x-y|^{N+2s}} \,dx\, dy\, d\mu(s)=0
\Bigg\}.
\end{eqnarray*}

Furthermore, for any~$k\in\N$ the function~$e_{k+1}\in E_{k+1}$ is an eigenfunction corresponding to the eigenvalue~$\lambda_{k+1}$ and achieves the minimum in~\eqref{kautovaloremisto}.

Also, the sequence of eigenfunctions~$e_k$ constitutes an orthonormal basis of~$L^2(\Omega)$ and an orthogonal basis of~$X(\Omega)$.

Each eigenvalue~$\lambda_k$ has finite multiplicity, namely, if there exists~$h\in \N$ such that
\[
\lambda_{k-1}<\lambda_k=\cdots =\lambda_{k+h}<\lambda_{k+h+1},
\]
then each eigenfunction corresponding to~$\lambda_k$ belongs to~$\mbox{span}\{e_k,\dots,e_{k+h}\}$.
\end{theorem}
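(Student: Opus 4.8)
The plan is to set up the eigenvalue problem as a standard abstract spectral problem for a compact self-adjoint operator on a Hilbert space, exploiting the coercivity of the associated bilinear form that the smallness of $\gamma$ will guarantee. Let me sketch the steps.

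First I would introduce the bilinear form
\[
\mathcal{B}(u,v):=\int_{[0,1]} c_{N,s}\iint_{\R^{2N}}\frac{(u(x)-u(y))(v(x)-v(y))}{|x-y|^{N+2s}}\,dx\,dy\,d\mu(s)
\]
on $X(\Omega)\times X(\Omega)$, so that $\mathcal{B}(u,u)=\int_{[0,1]}[u]_s^2\,d\mu^+(s)-\int_{[0,\overline s)}[u]_s^2\,d\mu^-(s)$ (with the conventions on $[u]_0,[u]_1$). The first key step is to show that, for $\gamma\in[0,\gamma_0]$ with $\gamma_0=\gamma_0(N,\Omega)$ small enough, $\mathcal{B}$ is an equivalent inner product on $X(\Omega)$: the upper bound is immediate from $\mu^-\le\mu^+$-type control, while coercivity $\mathcal{B}(u,u)\ge c\|u\|_{X(\Omega)}^2$ follows by absorbing the negative term $\int_{[0,\overline s)}[u]_s^2\,d\mu^-(s)\le\gamma\,\mu^+([\overline s,1])\sup_{s\in[0,\overline s)}[u]_s^2$ into the positive part using an interpolation inequality that bounds $[u]_s$ for $s<\overline s$ by a (small multiple of) $[u]_{s_\sharp}$ plus $\|u\|_{L^2(\Omega)}$, the latter being in turn controlled by a Poincaré inequality on the bounded set $\Omega$; this is where the constant $\gamma_0$ depending only on $N$ and $\Omega$ is produced. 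This coercivity is, I expect, the main technical obstacle, since it requires the right interpolation/Poincaré estimates uniformly in $s$ and is exactly the place where the ``reabsorption'' hypothesis \eqref{mu2} is used.

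Next I would invoke the compact embedding $X(\Omega)\hookrightarrow L^2(\Omega)$ — which follows from Proposition~\ref{embeddingsXOmega} together with the Rellich–Kondrachov-type compactness for the leading term of order $s_\sharp$ on the bounded Lipschitz domain $\Omega$ — and define the solution operator $T:L^2(\Omega)\to X(\Omega)\subset L^2(\Omega)$ by letting $Tf$ be the unique element of $X(\Omega)$ with $\mathcal{B}(Tf,v)=(f,v)_{L^2(\Omega)}$ for all $v\in X(\Omega)$, which exists by Lax–Milgram thanks to the coercivity just established. Then $T$ is self-adjoint (by symmetry of $\mathcal{B}$), positive, and compact (as the composition of the bounded map $L^2\to X(\Omega)$ with the compact embedding $X(\Omega)\hookrightarrow L^2$). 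The spectral theorem for compact self-adjoint operators yields a nonincreasing sequence of positive eigenvalues $\mu_k\searrow0$ of $T$ with an $L^2(\Omega)$-orthonormal basis of eigenfunctions $e_k$; setting $\lambda_k:=1/\mu_k$ gives the diverging sequence $0<\lambda_1\le\lambda_2\le\cdots\to+\infty$, each of finite multiplicity, and each $e_k$ solves \eqref{eigenvalueproblem} in the weak sense.

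Finally I would verify the variational characterizations and the basis properties. The Courant–Fischer min-max principle applied to $T$ (equivalently, to the Rayleigh quotient $\mathcal{B}(u,u)/\|u\|_{L^2(\Omega)}^2$) gives \eqref{kautovaloremisto}: the orthogonality constraints defining $E_{k+1}$ are precisely $\mathcal{B}(u,e_j)=0$, i.e. $L^2$-orthogonality to $e_1,\dots,e_k$ after using that $e_j$ are eigenfunctions, so the recursive minimization over $E_{k+1}$ coincides with the standard one. Orthonormality of $\{e_k\}$ in $L^2(\Omega)$ is part of the spectral theorem; $\mathcal{B}$-orthogonality is immediate since $\mathcal{B}(e_j,e_k)=\lambda_j(e_j,e_k)_{L^2(\Omega)}=0$ for $j\neq k$, and because $\mathcal{B}$ is an equivalent inner product on $X(\Omega)$ this is the same as $X(\Omega)$-orthogonality. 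That $\{e_k\}$ is \emph{complete} in $L^2(\Omega)$ follows from injectivity of $T$ (if $Tf=0$ then $(f,v)_{L^2}=\mathcal{B}(0,v)=0$ for all $v$ in the dense subspace $X(\Omega)$, hence $f=0$), and completeness in $X(\Omega)$ then follows from the density of $X(\Omega)$ in $L^2(\Omega)$ together with the coercivity estimate. The finite-multiplicity statement is exactly the finite-dimensionality of eigenspaces of the compact operator $T$, which also yields that any eigenfunction for $\lambda_k$ lies in $\mathrm{span}\{e_k,\dots,e_{k+h}\}$.
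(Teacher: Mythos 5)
Your argument is correct, but it takes a genuinely different route from the paper's. You reduce the whole statement to the spectral theorem for the compact self-adjoint solution operator $T$ produced by Lax--Milgram, and then read off the min-max characterization, the divergence of the eigenvalues, the basis properties and the finite multiplicity as standard consequences of the abstract theory. The paper instead gives a self-contained, constructive proof in the style of Servadei--Valdinoci \cite{MR3002745}: the eigenvalues are produced recursively by constrained minimization of the Rayleigh quotient on the weakly closed subspaces $E_{k+1}$ (via Lemma~\ref{lemmino1}); an inductive argument upgrades the Euler--Lagrange equation on $E_{k+1}$ to the weak equation tested against all of $X(\Omega)$; the divergence $\lambda_k\to+\infty$ is proved by contradiction using the $L^2$-orthonormality of the $e_k$ and the compact embedding; completeness in $X(\Omega)$ is established by a Bessel-series argument in Appendix~\ref{appendixFourier}; and finite multiplicity by the direct decomposition $\varphi=\varphi_1+\varphi_2$ with $\varphi_2\in S^\perp$ in Proposition~\ref{propg}. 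Your route is shorter and makes the underlying structure transparent; the paper's route avoids invoking the operator machinery as a black box and yields the recursive formula~\eqref{kautovaloremisto} and the spaces $E_{k+1}$ exactly as stated. Two small remarks. First, the coercivity/absorption step that you rightly flag as the main technical point is precisely Proposition~\ref{crucial} (quoted from~\cite{MR4736013}) combined with Proposition~\ref{propnormeequiv}; it is obtained most cleanly from the monotonicity Lemma~\ref{nons}, which gives $[u]_s\le C(N,\Omega)\,[u]_t$ for every $s<\overline s\le t$, so that the negative mass is absorbed pointwise in $t$ against $\int_{[\overline s,1]}[u]_t^2\,d\mu^+(t)$ with a constant depending only on $N$ and $\Omega$ --- the detour through $[u]_{s_\sharp}$ plus a Poincar\'e term risks producing a constant that also depends on $\mu$. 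Second, your injectivity/density argument for completeness in $L^2(\Omega)$ needs the explicit observation that $C^\infty_c(\Omega)\subset X(\Omega)$, which the paper verifies in the proof of Proposition~\ref{propf} again via Lemma~\ref{nons}.
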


\subsection{Organization of the paper}
The paper is organized as follows. In Section~\ref{sectionpreliminaries} we present the functional framework needed in our setting and we provide some preliminary results.

Section~\ref{sectionmaximumprinciples} deals with the maximum principles and contains the proofs of Theorems~\ref{weakmaxprinciple} and~\ref{strongmaximumprinciple}.
Section~\ref{sectioneigenvalues} contains the study of the eigenvalue problem~\eqref{eigenvalueproblem}.

Moreover, Appendix~\ref{appendixmaxprinciple} contains the proof of the statement in~\eqref{counterexamplemaxprinciple}, while Appendix~\ref{appendixFourier} includes the Fourier analysis needed to show the existence of a basis for the space~$X(\Omega)$ consisting of eigenfunctions.

\section{Preliminary results}\label{sectionpreliminaries}

In this section we introduce the functional analytical setting needed to address the problems introduced in the previous section
and we gather some preliminary observations.

Throughout the paper we assume that~$\Omega$ is an open and bounded subset of~$\R^N$ with Lipschitz boundary.

To this end, for~$s\in[0,1]$, we let
\begin{equation}\label{seminormgagliardo}
[u]_s:=
\begin{cases}
\|u\|_{L^2(\R^N)}  &\mbox{ if } s=0,
\\ \\
\displaystyle\left(c_{N,s}\iint_{\R^{2N}}\frac{|u(x)-u(y)|^2}{|x-y|^{N+2s}}\,dx\,dy \right)^{1/2} &\mbox{ if } s\in(0,1),
\\ \\
\|\nabla u\|_{L^2(\R^N)}  &\mbox{ if } s=1.
\end{cases}
\end{equation}
Here above~$c_{N,s}$ is the constant introduced in~\eqref{defcNs}.

We define the space~$X(\R^N)$ as the completion of~$C^\infty_c (\R^N)$ with respect to the seminorm
\begin{equation}\label{normadefinizione}
\|u\|_{X}:=\left(\;\int_{[0,1]}[u]^2_s\,d\mu^+(s)\right)^{\frac12}.
\end{equation}
Also, for any open and bounded set~$\Omega\subset\R^N$
with Lipschitz boundary, we define the space 
\begin{equation}\label{XOmegadefn}
X(\Omega):=\big\{u\in X(\R^N)\;\mbox{ s.t. }\;
u\equiv 0 \mbox{ in }\R^N\setminus \Omega \big\}.
\end{equation}
We remark that $\|u\|_X$ is a norm for the space $X(\Omega)$ and 
\begin{equation}\label{XHilbert}
\mbox{$X(\Omega)$ is a Hilbert space with respect to~$\|u\|_{X}$.}
\end{equation}
Its associated scalar product is defined, for any~$u$, $v\in X(\Omega)$, as
\begin{equation}\label{scalarepiu}
\langle u, v\rangle_+ :=  \int_{[0,1]}c_{N,s}\iint_{\R^{2N}}\frac{(u(x)-u(y))(v(x)-v(y))}{|x-y|^{N+2s}}\,dx\,dy\,d\mu^+(s).
\end{equation}

To ease notation, unless otherwise specified,
we will always use the compact expression introduced in~\eqref{ocmrgexpresfyrj5784i}.

Our study will leverage some Sobolev type and convergence results which we list here. To start with, we observe that higher exponents in fractional norms control the lower exponents through a constant which does not depend on any of the fractional exponents. This fact is a consequence
of the following observation, which is proved in~\cite[Lemma~2.1]{MR4736013}:

\begin{lemma}\label{nons} 
Let~$0\le s_1 \le s_2 \le1$. 

Then, for any measurable function~$u:\R^N\to\R$ with~$u=0$ a.e. in~$\R^N\setminus\Omega$ we have that
\begin{equation}\label{spp}
[u]_{s_1}\le C \, [u]_{s_2},
\end{equation}
for a suitable positive constant~$C=C(N,\Omega)$.
\end{lemma}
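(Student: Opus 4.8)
The plan is to prove Lemma~\ref{nons} by reducing the general statement $0\le s_1\le s_2\le 1$ to a handful of elementary cases, using the known $s=0$ and $s=1$ endpoints together with a uniform control of the Fourier symbols $(2\pi|\xi|)^{2s}$ by a single $s$-independent comparison. The central observation is that, since $u$ vanishes outside the bounded set $\Omega$, a Poincaré-type inequality is available, and hence we may always compare $[u]_{s_1}$ against $[u]_{s_2}$ through the largest exponent. I would split the argument into three regimes: (i) $0<s_1\le s_2<1$, i.e.\ both seminorms are genuine Gagliardo seminorms; (ii) $s_1=0$; and (iii) $s_2=1$ (the case $s_1=0,s_2=1$ being the classical Poincaré inequality, which I would simply cite).

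For regime (i) I would pass to the Fourier side. By Plancherel, for $s\in(0,1)$ one has $[u]_s^2=\int_{\R^N}(2\pi|\xi|)^{2s}|\widehat u(\xi)|^2\,d\xi$ (this is exactly the normalization for which $c_{N,s}$ in~\eqref{defcNs} is chosen). So the claim $[u]_{s_1}^2\le C^2[u]_{s_2}^2$ amounts to the pointwise-in-$\xi$ inequality $(2\pi|\xi|)^{2s_1}\le C^2\big((2\pi|\xi|)^{2s_2}+\text{l.o.t.}\big)$ after integrating against $|\widehat u|^2$. Writing $t:=(2\pi|\xi|)^2\ge 0$, one needs $t^{s_1}\le C^2\,t^{s_2}$ whenever $t\ge t_0$ for some threshold, and $t^{s_1}\le C'$ for $t\le t_0$; the first is immediate since $s_1\le s_2$ and $t\ge 1$, the second requires knowing that $\widehat u$ has enough decay/mass only on bounded frequencies — which is false pointwise but true after integration because $u\in L^2$ gives $\int_{t\le t_0}t^{s_1}|\widehat u|^2\le t_0^{s_1}\|u\|_{L^2}^2$, and then $\|u\|_{L^2}^2=[u]_0^2\le C''[u]_{s_2}^2$ by Poincaré (case iii applied with $s_1=0$). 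Summing, $[u]_{s_1}^2\le (1\vee t_0^{s_1})\,[u]_{s_2}^2 + t_0^{s_1}\|u\|_{L^2}^2\le C(N,\Omega)\,[u]_{s_2}^2$, choosing $t_0$ a fixed constant, so $C$ depends on $N$ and $\Omega$ only (through the Poincaré constant) and not on $s_1,s_2$.

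Regimes (ii) and (iii) are variants of the same computation: for $s_1=0$ we need $\|u\|_{L^2}\le C[u]_{s_2}$, which is the fractional Poincaré inequality on the bounded set $\Omega$ with constant independent of $s_2$ (again by splitting low and high frequencies in $\int (2\pi|\xi|)^{2s_2}|\widehat u|^2$: on $t\ge 1$ the symbol dominates $1$, on $t\le 1$ one uses that $u$ is supported in $\Omega$ so $\widehat u$ is real-analytic and $\|\widehat u\|_{L^\infty(\{t\le1\})}^2\lesssim |\Omega|\,\|u\|_{L^2}^2$, giving a low-frequency contribution that can be absorbed for $s_2$ bounded away from $0$; the remaining case $s_2$ near $0$ needs the diameter of $\Omega$ to enter the constant). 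Alternatively, and more robustly, I would invoke the uniform fractional Poincaré inequality on bounded domains with $s$-independent constant, e.g.\ as recorded in the literature on the operator $\int(-\Delta)^s\,d\mu$. For $s_2=1$ the symbol is $(2\pi|\xi|)^2$ and the same splitting works verbatim.

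\textbf{Main obstacle.} The delicate point is obtaining a comparison constant $C$ that is genuinely \emph{uniform in $s_1$ and $s_2$}, in particular as $s_2\searrow 0$ or as $s_1,s_2$ both approach an endpoint. The low-frequency contribution $\int_{|\xi|\le 1}(2\pi|\xi|)^{2s_1}|\widehat u|^2$ must be controlled by $[u]_{s_2}^2$ with a constant not blowing up, and this is exactly where the hypothesis $u\equiv 0$ in $\R^N\setminus\Omega$ is indispensable: it provides the $s$-independent Poincaré inequality $\|u\|_{L^2}\le C(N,\Omega)[u]_{s_2}$ that closes the estimate. I expect the cleanest write-up to isolate this Poincaré step as the single nontrivial input (it is essentially the content of~\cite[Lemma~2.1]{MR4736013}, which the excerpt attributes this lemma to), and then the monotonicity $t^{s_1}\le t^{s_2}$ for $t\ge 1$ makes the high-frequency part free.
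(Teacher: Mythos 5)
Your overall strategy is sound, and in fact it coincides with the one behind this lemma in the literature: the paper itself contains no proof of Lemma~\ref{nons}, which is imported verbatim from \cite[Lemma~2.1]{MR4736013}, and the argument there is precisely the Fourier-side comparison you outline, namely the identity $[u]_s^2=\int_{\R^N}(2\pi|\xi|)^{2s}|\widehat u(\xi)|^2\,d\xi$, valid for every $s\in[0,1]$ with the normalization \eqref{defcNs} (including the endpoints $s=0$ and $s=1$ of \eqref{seminormgagliardo}), combined with a low/high frequency splitting and the bound $\|\widehat u\|_{L^\infty}\le\|u\|_{L^1(\Omega)}\le|\Omega|^{1/2}\|u\|_{L^2(\R^N)}$, which is where the hypothesis $u\equiv0$ outside $\Omega$ enters.

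The one step of your plan that needs repair is the key input, the $s$-uniform Poincar\'e inequality $\|u\|_{L^2(\R^N)}\le C(N,\Omega)[u]_{s_2}$. Your claimed difficulty as $s_2\searrow0$, and the fallback of ``invoking the uniform fractional Poincar\'e inequality from the literature'', are respectively spurious and dangerously close to circular: that inequality is exactly the case $s_1=0$ of the lemma you are proving, so it should be established, not quoted. The fix is to split at a frequency threshold chosen in terms of $\Omega$ rather than at $2\pi|\xi|=1$: pick $\rho=\rho(N,|\Omega|)\in(0,1]$ so small that $|\Omega|\,\big|\{\xi\in\R^N:2\pi|\xi|\le\rho\}\big|\le\frac12$; then $\int_{\{2\pi|\xi|\le\rho\}}|\widehat u|^2\le\frac12\|u\|_{L^2}^2$, while on $\{2\pi|\xi|>\rho\}$ one has $1\le\rho^{-2s_2}(2\pi|\xi|)^{2s_2}$, so that $\|u\|_{L^2}^2\le2\rho^{-2s_2}[u]_{s_2}^2\le2\max(1,\rho^{-2})[u]_{s_2}^2$, a bound uniform over all $s_2\in(0,1]$ and trivial at $s_2=0$; no degeneracy occurs near $s_2=0$ and no separate role for the diameter is needed. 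With this in hand, your case (i) computation (split at $t_0=1$, use $t^{s_1}\le t^{s_2}$ for $t\ge1$, and control the low frequencies by $\|u\|_{L^2}^2$) closes as written, and case (iii) is subsumed since the Fourier identity also covers $s=1$. Two small points to add for completeness: the inequality is vacuous when $[u]_{s_2}=+\infty$, and when it is finite you need $u\in L^2(\R^N)$ before invoking Plancherel and before the absorption step (one cannot subtract $\frac12\|u\|_{L^2}^2$ from both sides otherwise); for fixed $s_2\in(0,1)$ this follows from the crude spatial estimate $[u]_{s_2}^2\ge c_{N,s_2}\int_\Omega|u(x)|^2\Big(\int_{\R^N\setminus B_R(x)}|x-y|^{-N-2s_2}\,dy\Big)dx$ with $R=\mathrm{diam}(\Omega)$, the endpoint $s_2=1$ being the classical Poincar\'e inequality.
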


It is worth noting that, in our framework, the negative component of the signed measure~$\mu$ can be reabsorbed into the positive one,
provided that~$\gamma$ in~\eqref{mu2} is small enough.
This has been proved in~\cite[Proposition~2.3]{MR4736013}:
 
\begin{proposition}\label{crucial} 
Let~$\mu$ satisfy~\eqref{mu0}, \eqref{mu1} and~\eqref{mu2}.

Then, there exists~$c_0=c_0(N,\Omega)>0$ such that, for any~$u\in X(\Omega)$,
\[
\int_{[{{ 0 }}, \overline s)} [u]_{s}^2 \, d\mu^- (s) \le c_0\,\gamma \int_{[\overline s, 1]} [u]^2_{s} \, d\mu(s).
\]
\end{proposition}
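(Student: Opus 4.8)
The plan is to reduce everything to the exponent-free comparison estimate of Lemma~\ref{nons} and then integrate twice: once against $\mu^+$ over the ``good'' range $[\overline s,1]$ to produce a pointwise-in-$s$ bound on $[u]_s$, and once against $\mu^-$ over the ``bad'' range $[0,\overline s)$, invoking the smallness condition~\eqref{mu2} to close the argument.

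First I would fix $u\in X(\Omega)$, so that $u$ vanishes a.e.\ outside $\Omega$ and $\int_{[0,1]}[u]_s^2\,d\mu^+(s)=\|u\|_X^2<+\infty$; in particular the right-hand side of the asserted inequality is finite. For every $s\in[0,\overline s)$ and every $\sigma\in[\overline s,1]$ we have $s\le\overline s\le\sigma$, so Lemma~\ref{nons} yields $[u]_s^2\le C^2[u]_\sigma^2$ with $C=C(N,\Omega)$ independent of both $s$ and $\sigma$. Integrating this in $\sigma$ over $[\overline s,1]$ against $\mu^+$ and dividing by $\mu^+([\overline s,1])$, which is strictly positive by~\eqref{mu0}, gives
\[
[u]_s^2\le\frac{C^2}{\mu^+([\overline s,1])}\int_{[\overline s,1]}[u]_\sigma^2\,d\mu^+(\sigma)\qquad\text{for every }s\in[0,\overline s),
\]
the right-hand side being a finite constant independent of $s$.

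Next I would integrate this bound over $s\in[0,\overline s)$ against the finite measure $\mu^-$, which brings out the factor $\mu^-([0,\overline s))$; by~\eqref{mu2} the ratio $\mu^-([0,\overline s))/\mu^+([\overline s,1])$ does not exceed $\gamma$, whence
\[
\int_{[0,\overline s)}[u]_s^2\,d\mu^-(s)\le C^2\gamma\int_{[\overline s,1]}[u]_\sigma^2\,d\mu^+(\sigma).
\]
Finally, since $\mu^-$ vanishes on $[\overline s,1]$ by~\eqref{mu1}, we have $\mu=\mu^+$ there, so $\int_{[\overline s,1]}[u]_\sigma^2\,d\mu^+(\sigma)=\int_{[\overline s,1]}[u]_\sigma^2\,d\mu(\sigma)$, and taking $c_0:=C^2$ concludes the proof.

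There is no substantive obstacle here: the only steps requiring care are the measure-theoretic bookkeeping---checking that each integral involved is finite (which follows from $u\in X(\Omega)$ together with the finiteness of $\mu^-$) and that integrating the pointwise estimate of Lemma~\ref{nons} uniformly in the auxiliary variable $\sigma$ is legitimate---and the identification of $\mu$ with $\mu^+$ on $[\overline s,1]$. The dependence of $c_0=C^2$ on $N$ and $\Omega$ alone is inherited directly from Lemma~\ref{nons}, as claimed.
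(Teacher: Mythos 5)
Your argument is correct: combining Lemma~\ref{nons} (with $s_1=s<\overline s\le\sigma=s_2$), averaging in $\sigma$ against $\mu^+$ over $[\overline s,1]$ using~\eqref{mu0}, integrating in $s$ against $\mu^-$ with~\eqref{mu2}, and identifying $\mu=\mu^+$ on $[\overline s,1]$ via~\eqref{mu1} is exactly the standard route, and it is the one followed in the proof the paper cites (\cite[Proposition~2.3]{MR4736013}) rather than reproduces. No gaps worth flagging; the constant $c_0=C^2$ depends only on $N$ and $\Omega$ as claimed.
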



Now, we recall the result stated in~\cite[Proposition~2.4]{MR4736013}, which provides some Sobolev type embeddings for the space~$X(\Omega)$ defined in~\eqref{XOmegadefn}.

\begin{proposition}\label{embeddingsXOmega}
Let~$\mu$ satisfy~\eqref{mu0}, \eqref{mu1} and~\eqref{mu2} and take $N>2\overline s$.

Then, the space~$X(\Omega)$ is continuously embedded in~$H^{\overline s}(\Omega)$.

Furthermore, $X(\Omega)$ is continuously and compactly embedded in~$L^2(\Omega)$. 
\end{proposition}

We notice that, if a sequence is bounded in~$X(\Omega)$, then it converges strongly with respect to the negative part of 
the measure~$\mu$, as shown in~\cite[Lemma~2.8]{DPLSVlog}:

\begin{lemma}\label{lemmalimitemumeno}
Let~$\mu$ satisfy~\eqref{mu0} and~\eqref{mu1}. Let~$u_n$ be a sequence in~$X(\Omega)$ such that~$u_n$ converges weakly to some~$u$ in~$X(\Omega)$
as~$n\to+\infty$.

Then,
\begin{equation}\label{limitemumeno1}
\lim_{n\to +\infty}  \int_{[0,\overline s)} [u_n]^2_s \, d\mu^-(s) =  \int_{[0,\overline s)} [u]^2_s \, d\mu^-(s).
\end{equation}
\end{lemma}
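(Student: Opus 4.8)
\textbf{Proof plan for Lemma \ref{lemmalimitemumeno}.}
The strategy is to decompose the integral over $[0,\overline s)$ into a part near the endpoint $s=0$, where things are delicate because the Gagliardo seminorm $[u_n]_s$ degenerates into the $L^2$-norm and the fractional kernel is not singular, and a part bounded away from $0$, where interpolation and compactness bail us out. First I would fix $\varepsilon\in(0,\overline s)$ and split
\[
\int_{[0,\overline s)} [u_n]^2_s\,d\mu^-(s)
=\int_{[0,\varepsilon)} [u_n]^2_s\,d\mu^-(s)
+\int_{[\varepsilon,\overline s)} [u_n]^2_s\,d\mu^-(s).
\]
For the second piece, I would argue that the functional $u\mapsto \int_{[\varepsilon,\overline s)} [u]^2_s\,d\mu^-(s)$ is weakly continuous on $X(\Omega)$: since $u_n\rightharpoonup u$ in $X(\Omega)$, Proposition \ref{embeddingsXOmega} gives $u_n\to u$ strongly in $L^r(\Omega)$ for $r<2^*_{s_\sharp}$; then, combining this $L^2$-convergence with the uniform bound on $\|u_n\|_X$ (hence on $[u_n]_{s_\sharp}$, by Lemma \ref{nons}) and interpolating the $H^s$-norm for $s\in[\varepsilon,\overline s)$ between $L^2(\Omega)$ and $H^{s_\sharp}(\Omega)$ with $s<\overline s\le s_\sharp$ — actually, more simply, from strong $L^2(\Omega)$-convergence and boundedness in $H^{s_\sharp}(\Omega)$ one deduces strong convergence in $H^s(\Omega)$ for every $s<s_\sharp$ — one gets $[u_n]_s\to[u]_s$ for each such $s$, with the further domination $[u_n]_s^2\le C[u_n]_{s_\sharp}^2\le C'$ uniformly in $n$ and $s$. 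Dominated convergence against the finite measure $\mu^-$ restricted to $[\varepsilon,\overline s)$ then yields
\[
\lim_{n\to+\infty}\int_{[\varepsilon,\overline s)} [u_n]^2_s\,d\mu^-(s)
=\int_{[\varepsilon,\overline s)} [u]^2_s\,d\mu^-(s).
\]

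For the first piece, I would control it uniformly by $\varepsilon$. Using Lemma \ref{nons} with $s_1=s$, $s_2=s_\sharp$, or more quantitatively interpolating $[u_n]_s$ between $[u_n]_0=\|u_n\|_{L^2}$ and $[u_n]_{s_\sharp}$ with exponents $\theta=s/s_\sharp$, one gets $[u_n]_s^2\le C\,\|u_n\|_{L^2(\Omega)}^{2(1-\theta)}[u_n]_{s_\sharp}^{2\theta}\le C\,M$ where $M$ bounds $\|u_n\|_X$; but to make the $[0,\varepsilon)$ contribution genuinely small I want a factor that vanishes as $\varepsilon\to0$ uniformly in $n$. The clean way is to note that the measure $\mu^-$ is finite, so $\mu^-\big([0,\varepsilon)\big)\to\mu^-(\{0\})$ as $\varepsilon\searrow0$ — this does not vanish in general, so instead I would handle $s=0$ separately: by strong $L^2(\Omega)$-convergence, $[u_n]_0^2=\|u_n\|^2_{L^2(\Omega)}\to\|u\|^2_{L^2(\Omega)}=[u]_0^2$ and $\mu^-(\{0\})[u_n]_0^2\to\mu^-(\{0\})[u]_0^2$; for $s\in(0,\varepsilon)$ the kernel $c_{N,s}$ carries a factor $s(1-s)$ (see \eqref{defcNs}) that vanishes as $s\searrow0$, and one can show $[u_n]_s^2\le C\,s\,[u_n]_{s_\sharp}^2 + C\|u_n\|_{L^2}^2 \cdot o_s(1)$, or argue via a refined version of Lemma \ref{nons} that $[u_n]_s\to[u_n]_0$ uniformly; combining with the uniform bound, the $(0,\varepsilon)$ tail is bounded by $\omega(\varepsilon)\,M$ for some modulus $\omega$ with $\omega(\varepsilon)\to0$. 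Then the same estimate applied to $u$ in place of $u_n$ controls $\int_{(0,\varepsilon)}[u]^2_s\,d\mu^-(s)$, and a standard $3\varepsilon$-argument — pick $\varepsilon$ to make both tails $<\varepsilon$, then let $n\to+\infty$ on $[\varepsilon,\overline s)$ and on $\{0\}$ — closes the proof.

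The main obstacle is the behaviour near $s=0$: one must verify that the Gagliardo seminorm $[u_n]_s$ transitions continuously to $\|u_n\|_{L^2}$ as $s\searrow0$ \emph{uniformly} over the bounded family $\{u_n\}$, so that the atom of $\mu^-$ at $0$ is handled by plain $L^2$-convergence while the nonatomic mass near $0$ is absorbed by smallness. This is exactly where the explicit form of $c_{N,s}$ in \eqref{defcNs}, with its $s(1-s)$ prefactor, and the uniformity built into Lemma \ref{nons} do the work; I expect the argument reduces to an elementary but slightly fiddly interpolation/splitting of the Gagliardo double integral into $|x-y|\le 1$ and $|x-y|>1$ regions, the latter contributing $O(\|u_n\|_{L^2}^2)$ with a constant proportional to $c_{N,s}\to0$ and the former controlled by $c_{N,s}[u_n]_{s_\sharp}^2$ up to harmless constants. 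Given that \cite{DPLSVlog} is cited as the source, I would also simply invoke the corresponding estimates there if a self-contained derivation becomes too long.
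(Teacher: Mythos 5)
The paper itself does not prove this lemma: it quotes it from \cite[Lemma~2.8]{DPLSVlog}. The argument behind that citation is essentially the one your ``middle part'' already implements: weak convergence gives a uniform bound on $\|u_n\|_X$, hence on $[u_n]_{s_\sharp}$ by \eqref{scritico} and Lemma~\ref{nons}; the compact embedding of Proposition~\ref{embeddingsXOmega} gives $u_n\to u$ strongly in $L^2(\Omega)$; every $s\in[0,\overline s)$ satisfies $s<s_\sharp$, so interpolation between $L^2(\Omega)$ and $H^{s_\sharp}$ (applied to $u_n-u$) yields $[u_n]_s\to[u]_s$ for each such $s$, the case $s=0$ being the $L^2$ convergence itself; finally $[u_n]_s^2\le C(N,\Omega)\,[u_n]_{s_\sharp}^2\le C'$ uniformly in $n$ and $s$, and since $\mu^-$ is finite, dominated convergence on all of $[0,\overline s)$ gives \eqref{limitemumeno1}. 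So your plan is correct in substance, but the splitting at $\varepsilon$ and the entire discussion of the behaviour near $s=0$ are superfluous: dominated convergence only needs pointwise-in-$s$ convergence plus an $(n,s)$-uniform constant bound, not any uniformity of the convergence as $s\searrow0$.

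Moreover, the specific mechanism you propose for the $(0,\varepsilon)$ tail contains an error. The inequality $[u_n]_s^2\le C\,s\,[u_n]_{s_\sharp}^2+C\|u_n\|_{L^2}^2\,o_s(1)$ cannot hold: with the normalization \eqref{defcNs} one has $[v]_s^2\to\|v\|_{L^2(\Omega)}^2$ as $s\searrow0$, while your right-hand side tends to $0$, which would force $u_n\equiv0$ (a correct variant is $[v]_s^2\le \|v\|_{L^2(\Omega)}^2+C\,s\,[v]_{s_\sharp}^2$, but it is not needed); similarly, the ``uniform transition'' $[u_n]_s\to[u_n]_0$ over the bounded family is never required. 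If you insist on the splitting, the tail is disposed of in one line without invoking the $s(1-s)$ factor in $c_{N,s}$: by Lemma~\ref{nons} and the uniform bound, $\int_{(0,\varepsilon)}[u_n]_s^2\,d\mu^-(s)\le C\,\mu^-\big((0,\varepsilon)\big)$, and $\mu^-\big((0,\varepsilon)\big)\to0$ as $\varepsilon\searrow0$ by continuity from above of the finite measure $\mu^-$ (the atom at $s=0$, which is the only mass that does not vanish, is precisely the piece your $L^2$ convergence already covers). With that repair---or, more simply, by dropping the splitting altogether---your argument is complete and coincides with the cited proof.
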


We now introduce a norm which we prove to be equivalent to the one defined in~\eqref{normadefinizione}. This is relevant for our upcoming study of eigenvalues.

To this end, we recall hypothesis~\eqref{mu1} and we also define the inner product
\begin{equation}\label{scalaremeno}
\langle u, v\rangle_- :=  \int_{[0,\overline s)}c_{N,s}\iint_{\R^{2N}}\frac{(u(x)-u(y))(v(x)-v(y))}{|x-y|^{N+2s}}\,dx\,dy\,d\mu^-(s).
\end{equation}
We let
\begin{equation}\label{scalare+-}
\langle u, v\rangle:= \langle u, v\rangle_+ - \langle u, v\rangle_-
\end{equation}
and \begin{equation}\label{norma+-}
\|u\|:= \sqrt{\langle u, u\rangle} = \left(\, \int_{[0,1]}[u]^2_s\,d\mu^+(s) - \int_{[0,\overline s)}[u]^2_s\,d\mu^-(s)\right)^{\frac12},
\end{equation}
and we observe the following:

\begin{proposition}\label{propnormeequiv}
Let $\mu$ satisfy \eqref{mu0}, \eqref{mu1} and~\eqref{mu2}. 

Then, there exists~$\gamma_0>0$, depending only on~$N$ and~$\Omega$, such that, if~$\gamma\in[0,\gamma_0]$,
\eqref{scalare+-} is a dot product in $X(\Omega)$ and
\[
(1-c_0\gamma)^{\frac12}\|u\|_X\le \|u\|\le \|u\|_X,
\]
being $c_0>0$ as in Proposition~\ref{crucial}.
\end{proposition}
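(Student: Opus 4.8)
The plan is to prove the two assertions of Proposition~\ref{propnormeequiv} --- that $\langle\cdot,\cdot\rangle$ is a genuine inner product on $X(\Omega)$, and that $\|\cdot\|$ is equivalent to $\|\cdot\|_X$ --- simultaneously, by deriving the two-sided bound and observing that positive-definiteness of $\langle\cdot,\cdot\rangle$ follows from the lower bound. Bilinearity and symmetry of $\langle\cdot,\cdot\rangle$ are immediate from the definitions \eqref{scalarepiu}, \eqref{scalaremeno} and \eqref{scalare+-}, so the only nontrivial point is that $\langle u,u\rangle>0$ for $u\neq0$, which is exactly the content of the strict inequality $\|u\|\ge(1-c_0\gamma)^{1/2}\|u\|_X>0$ once $\gamma<1/c_0$.

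First I would record that, by \eqref{norma+-}, for every $u\in X(\Omega)$ one has
\[
\|u\|^2=\|u\|_X^2-\int_{[0,\overline s)}[u]_s^2\,d\mu^-(s).
\]
Since $\mu^-\ge0$, the integral term is nonnegative, which gives the upper bound $\|u\|\le\|u\|_X$ at once. For the lower bound, I would invoke Proposition~\ref{crucial}, which under hypotheses \eqref{mu0}, \eqref{mu1}, \eqref{mu2} yields a constant $c_0=c_0(N,\Omega)>0$ with
\[
\int_{[0,\overline s)}[u]_s^2\,d\mu^-(s)\le c_0\,\gamma\int_{[\overline s,1]}[u]_s^2\,d\mu(s).
\]
By \eqref{mu1}, $\mu^-([\overline s,1])=0$, so $\mu=\mu^+$ on $[\overline s,1]$ and hence $\int_{[\overline s,1]}[u]_s^2\,d\mu(s)=\int_{[\overline s,1]}[u]_s^2\,d\mu^+(s)\le\int_{[0,1]}[u]_s^2\,d\mu^+(s)=\|u\|_X^2$. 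Combining,
\[
\int_{[0,\overline s)}[u]_s^2\,d\mu^-(s)\le c_0\,\gamma\,\|u\|_X^2,
\]
and therefore $\|u\|^2\ge(1-c_0\gamma)\|u\|_X^2$. Choosing $\gamma_0:=1/(2c_0)$ (or any value strictly below $1/c_0$), for $\gamma\in[0,\gamma_0]$ the factor $1-c_0\gamma$ is strictly positive, giving the stated equivalence $(1-c_0\gamma)^{1/2}\|u\|_X\le\|u\|\le\|u\|_X$.

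Finally, to conclude that \eqref{scalare+-} is a dot product, I would note that the equivalence just established shows $\|u\|>0$ whenever $\|u\|_X>0$, i.e. whenever $u\neq0$ in $X(\Omega)$ (recall from \eqref{XHilbert} and the surrounding discussion that $\|\cdot\|_X$ is a norm on $X(\Omega)$); together with the evident bilinearity and symmetry of $\langle\cdot,\cdot\rangle$, this establishes that $\langle\cdot,\cdot\rangle$ is an inner product on $X(\Omega)$. I do not anticipate any serious obstacle here: the argument is essentially a bookkeeping exercise built on Proposition~\ref{crucial} and hypothesis \eqref{mu1}. The only point requiring a little care is the passage from the weight $d\mu$ on $[\overline s,1]$ in Proposition~\ref{crucial} to the weight $d\mu^+$ on all of $[0,1]$, which is precisely where \eqref{mu1} (the absence of negative mass above $\overline s$) and the nonnegativity of $\mu^+$ on $[0,\overline s)$ are used; and the explicit, $u$-independent choice of $\gamma_0$ in terms of $c_0(N,\Omega)$, which makes transparent the claimed dependence of $\gamma_0$ on $N$ and $\Omega$ only.
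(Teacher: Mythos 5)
Your proposal is correct and follows essentially the same route as the paper: both arguments reduce everything to the reabsorption estimate of Proposition~\ref{crucial}, drop the nonnegative $\mu^-$-term for the upper bound, and obtain positive-definiteness as a consequence of the lower bound for $\gamma$ small. The only difference is cosmetic --- you spell out the intermediate passage from $\int_{[\overline s,1]}[u]_s^2\,d\mu(s)$ to $\|u\|_X^2$ via \eqref{mu1}, which the paper leaves implicit.
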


\begin{proof}
In light of~\eqref{scalarepiu} and~\eqref{scalaremeno}, we have that~\eqref{scalare+-} is commutative,  distributive and is compatible with the scalar multiplication. 

Moreover, thanks to~\eqref{scalare+-} and Proposition~\ref{crucial},
\[
\begin{split}&
\langle u, u\rangle= \langle u, u\rangle_+ - \langle u, u\rangle_- = \int_{[0,1]}[u]^2_s\,d\mu^+(s) - \int_{[0,\overline s]}[u]^2_s\,d\mu^-(s)\\
&\qquad \ge (1-c_0\gamma)  \int_{[0,1]}[u]^2_s\,d\mu^+(s)\ge 0,
\end{split}
\] provided that~$\gamma$ is sufficiently small. 
Namely, in this case we see that~\eqref{scalare+-} is positive definite, and therefore it identifies a dot product.

Now, by~\eqref{norma+-} and~\eqref{normadefinizione}, we have
\[
\|u\|= \left(\, \int_{[0,1]}[u]^2_s\,d\mu^+(s) - \int_{[0,\overline s]}[u]^2_s\,d\mu^-(s)\right)^{\frac12}\le \left(\, \int_{[0,1]}[u]^2_s\,d\mu^+(s)\right)^{\frac12} = \|u\|_X.
\]
Moreover, by Proposition~\ref{crucial}, we deduce that
\begin{eqnarray*}&&
\|u\|= \left(\, \int_{[0,1]}[u]^2_s\,d\mu^+(s) - \int_{[0,\overline s]}[u]^2_s\,d\mu^-(s)\right)^{\frac12}\\&&\qquad\
\ge \left((1-c_0\gamma)  \int_{[0,1]}[u]^2_s\,d\mu^+(s)\right)^{\frac12} = (1-c_0\gamma)^{\frac12} \|u\|_X.
\end{eqnarray*}
Combining the last two displays, we obtain the desired result.
\end{proof}

\section{Maximum principles}\label{sectionmaximumprinciples}

In this section we provide the proofs of some maximum principles for the operator~$L^+_\mu$. 

Our first main result establishes the maximum principle for weak solutions, in the sense of the following definition (recall that the ``abuse of notation'' in~\eqref{ocmrgexpresfyrj5784i} is assumed):

\begin{definition}
Let~$f\in L^2(\Omega)$. 
We say that~$u\in X(\Omega)$ is a weak solution of
\begin{equation}\label{probsoluzioni}
\begin{cases}
L_\mu^+ u= f &\mbox{ in } \Omega,\\
u=0 &\mbox{ in } \R^N\setminus\Omega,
\end{cases}
\end{equation}
if, for any~$v\in X(\Omega)$,
\[
\int_{[0,1]}c_{N,s}\iint_{\R^{2N}}\frac{(u(x)-u(y))(v(x)-v(y))}{|x-y|^{N+2s}}\,dx\,dy\,d\mu^+(s)= \int_\Omega f(x) v(x)\,dx.
\]

Moreover, we say that~$u\in X(\R^N)$ satisfies~$L^+_\mu u\ge f $ in~$\Omega$ in the weak sense if, for any nonnegative~$v\in X(\Omega)$
\begin{equation}\label{weak>=}
\int_{[0,1]}c_{N,s}\iint_{\R^{2N}}\frac{(u(x)-u(y))(v(x)-v(y))}{|x-y|^{N+2s}}\,dx\,dy\,d\mu^+(s)\ge \int_\Omega f(x) v(x)\,dx.
\end{equation}
\end{definition}

\begin{proof}[Proof of Theorem~\ref{weakmaxprinciple}]
Assume by contradiction that there exists a set~$E\subseteq \Omega$ with~$|E|>0$ such that~$u<0$ a.e in~$E$.

We set~$u^\pm:= \max\{\pm u,0\}$. Since~$u\ge 0$ a.e. in~$\R^N\setminus\Omega$, we have that~$u^-\equiv 0$ a.e.
in~$\R^N\setminus \Omega$. 
Moreover, we see that, for all~$x$, $y\in\R^N$,
$$ |u^-(x)-u^-(y)|\le|u(x)-u(y)|,$$
and therefore~$u^-\in X(\Omega)$.

Accordingly, we can use~$u^-$ as a test function in~\eqref{weak>=} 
(here~$f\equiv 0$) and we see that
\begin{equation}\label{78hy72}
\begin{aligned}
0&\leq \int_{[0,1]}c_{N,s}\iint_{\R^{2N}}\frac{(u(x)-u(y))(u^-(x)-u^-(y))}{|x-y|^{N+2s}}\,dx\,dy\,d\mu^+(s) \\
&= \int_{[0,1)}c_{N,s}\iint_{\R^{2N}}\frac{(u(x)-u(y))(u^-(x)-u^-(y))}{|x-y|^{N+2s}}\,dx\,dy\,d\mu^+(s)\\&\qquad\qquad
+\mu^+(\{1\})\int_\Omega \nabla u(x)\cdot\nabla u^-(x)\,dx\\
&=
\int_{[0,1)}c_{N,s}\iint_{\R^{2N}}\frac{(u^+(x)-u^+(y))(u^-(x)-u^-(y))}{|x-y|^{N+2s}}\,dx\,dy\,d\mu^+(s) \\
&\qquad\qquad-\int_{[0,1)}c_{N,s}\iint_{\R^{2N}}\frac{|u^-(x)-u^-(y)|^2}{|x-y|^{N+2s}}\,dx\,dy\,d\mu^+(s)
-\mu^+(\{1\})\int_\Omega |\nabla u^-(x)|^2\,dx
\\
&=-\int_{[0,1)}c_{N,s}\iint_{\R^{2N}}\frac{u^+(x)u^-(y)+u^+(y)u^-(x)}{|x-y|^{N+2s}}\,dx\,dy\,d\mu^+(s)\\
&\qquad\qquad-\int_{[0,1)}c_{N,s}\iint_{\R^{2N}}\frac{|u^-(x)-u^-(y)|^2}{|x-y|^{N+2s}}\,dx\,dy\,d\mu^+(s)
-\mu^+(\{1\})\int_\Omega |\nabla u^-(x)|^2\,dx\\&\le
-\int_{[0,1)}c_{N,s}\iint_{\R^{2N}}\frac{|u^-(x)-u^-(y)|^2}{|x-y|^{N+2s}}\,dx\,dy\,d\mu^+(s)
-\mu^+(\{1\})\int_\Omega |\nabla u^-(x)|^2\,dx.
\end{aligned}
\end{equation}

Now, if~$\mu^+(\{1\})>0$, we deduce from this that
$$ \int_\Omega |\nabla u^-(x)|^2\,dx\le0$$
and therefore~$u^-$ is constant, and therefore constantly equal to zero, given its trace values along~$\partial\Omega$ (recall the notation in footnote~\ref{FOO:ABNO5}). This provides a contradiction with the existence
of the above mentioned set~$E$.

Hence, we can now suppose that~$\mu^+(\{1\})=0$. In this case, we infer from~\eqref{mu0}
that~$\mu^+\big([\overline s, 1)\big)>0$, and thus by~\eqref{78hy72} that
\begin{eqnarray*}&&0\ge \int_{[\overline s, 1)}c_{N,s}\iint_{\R^{2N}}\frac{|u^-(x)-u^-(y)|^2}{|x-y|^{N+2s}}\,dx\,dy\,d\mu^+(s)\\&&\qquad
\ge \int_{[\overline s, 1)}c_{N,s}\iint_{E\times(\R^{N}\setminus\Omega)}\frac{|u^-(x)|^2}{|x-y|^{N+2s}}\,dx\,dy\,d\mu^+(s)>0.
\end{eqnarray*} This contradiction completes the proof of
Theorem~\ref{weakmaxprinciple}.
\end{proof}

We now introduce the definition of classical solutions of problem~\eqref{probsoluzioni}. To this end, we define
the norm
\begin{equation}\label{inteGRAeiuwot} \|u\|_{L^1_{\mu^+}(\R^N)}
:=\int_{(0,1)}c_{N,s} \int_{\R^N}\frac{|u(x)|}{1+|x|^{N+2s}}\,dx\,d\mu^+(s)\end{equation}
and the space
\begin{equation}\label{inteGRAeiuwot2}
C_\mu(\R^N) :=\left\{u\in C(\R^N) \mbox{ such that }\|u\|_{L^1_{\mu^+}(\R^N)}<+\infty \right\}.
\end{equation}
This definition should be compared with the one of the space~$C_s(\R^N)$
given e.g. in~\cite[formula~(2.9)]{MR4387204} when the operator~$L^+_\mu$ boils down to~$-\Delta+(-\Delta)^s$.

We stress that the integration in~\eqref{inteGRAeiuwot} is on~$s\in(0,1)$ and does not extend to~$s=0$ nor~$s=1$, since the identity map and the classical Laplacian do not require any decay assumption of the function under consideration.

We also introduce the notation
\begin{equation}\label{gammas}
 \overline{\Gamma}_N:=\max_{s\in [0,1]}\dfrac{\displaystyle\Gamma\left(\dfrac{N+2s}{2}\right)}{\displaystyle\Gamma(2-s)}
\end{equation} 

Our next result shows that the space~$C_\mu(\R^N)$ is the proper one to handle classical solutions for the operator~$L^+_\mu$. Indeed, for~$u\in C_\mu(\R^N)\cap C^2(\Omega)$, one can compute~$L_\mu^+ u$ pointwise, according to the following statement:

\begin{proposition}\label{propcompute}
Let~$u\in C_\mu(\R^N)\cap C^2(\Omega)$.
Then, $L^+_\mu u \in L^\infty_{loc}(\Omega)$ and, for all~$x\in\Omega$
such that~$B_r(x)\subset\Omega$ for some~$r\in(0,1)$,
$$ |L^+_\mu u(x)| \le
C\left( \|D^2 u\|_{L^\infty(B_r(x))} +|u(x)|
+\|u\|_{L^1_{\mu^+}(\R^N)}\right),
$$
for some~$C>0$, depending on~$N$, $\Omega$, $r$ and~$\mu^+$.
\end{proposition}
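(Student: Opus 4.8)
The plan is to estimate the pointwise value of $L_\mu^+ u(x)$ by splitting the outer integral over $s\in[0,1]$ into the contributions of the endpoints $s=0$, $s=1$, and the interior $s\in(0,1)$, and, for each fixed $s\in(0,1)$, splitting the defining singular integral in \eqref{AMMU0} into a ``near'' piece on $B_r(x)$ and a ``far'' piece on $\R^N\setminus B_r(x)$. For the endpoint $s=1$ the operator is $-\Delta u(x)$, which is bounded by $\|D^2u\|_{L^\infty(B_r(x))}$; for $s=0$ it is just $u(x)$. These carry harmless constants $\mu^+(\{0\}),\mu^+(\{1\})\le\mu^+([0,1])$.

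For fixed $s\in(0,1)$, on the near piece I would use the second-order Taylor expansion: since $u\in C^2(\Omega)$ and $B_r(x)\subset\Omega$, for $|y|<r$ one has the pointwise bound $|2u(x)-u(x+y)-u(x-y)|\le \|D^2u\|_{L^\infty(B_r(x))}\,|y|^2$, so that
\[
c_{N,s}\int_{B_r}\frac{|2u(x)-u(x+y)-u(x-y)|}{|y|^{N+2s}}\,dy
\le c_{N,s}\,\|D^2u\|_{L^\infty(B_r(x))}\int_{B_r}\frac{dy}{|y|^{N+2s-2}}
= c_{N,s}\,\|D^2u\|_{L^\infty(B_r(x))}\,\frac{\omega_{N-1}\,r^{2-2s}}{2-2s}.
\]
Here the key point is that the prefactor $c_{N,s}$ contains the factor $s(1-s)$ from \eqref{defcNs}, and the surviving factor $\frac{1}{2-2s}$ from the radial integral is exactly cancelled by the $(1-s)$, so $\frac{c_{N,s}}{2-2s}$ stays bounded uniformly in $s\in(0,1)$ (using also $r<1$ so that $r^{2-2s}\le 1$, and $\overline\Gamma_N$ from \eqref{gammas} to bound the Gamma-function ratio). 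Thus the near contribution, after integrating in $\mu^+$, is at most $C\,\|D^2u\|_{L^\infty(B_r(x))}\,\mu^+((0,1))$.

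On the far piece, for $|y|\ge r$ I bound $|2u(x)-u(x+y)-u(x-y)|\le 2|u(x)|+|u(x+y)|+|u(x-y)|$ and split accordingly: the $|u(x)|$-term gives $2|u(x)|\,c_{N,s}\int_{|y|\ge r}|y|^{-N-2s}\,dy = 2|u(x)|\,c_{N,s}\,\frac{\omega_{N-1}}{2s}\,r^{-2s}$, where now $\frac{c_{N,s}}{2s}$ is uniformly bounded in $s$ thanks to the factor $s$ in $c_{N,s}$, giving a term controlled by $C\,|u(x)|\,\mu^+((0,1))$ after integration (with $C$ depending on $r$ through $r^{-2s}\le\max\{1,r^{-2}\}$). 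For the translated terms I would perform the change of variables $z=x\pm y$ and compare $|z-x|^{-N-2s}$ with $(1+|z|)^{-N-2s}$: since $x$ ranges in the bounded set $\Omega$ and $|z-x|\ge r$, there is a constant $c(r,\Omega)$ with $|z-x|^{-N-2s}\le c(r,\Omega)\,(1+|z|)^{-N-2s}$ uniformly in $s$, so these contribute at most $c(r,\Omega)\int_{\R^N}\frac{|u(z)|}{1+|z|^{N+2s}}\,dz$, which after integrating against $\mu^+$ and recalling \eqref{inteGRAeiuwot} is bounded by $C\,\|u\|_{L^1_{\mu^+}(\R^N)}$. Adding all pieces, taking the supremum over $x$ with $B_r(x)\subset\Omega$ for a fixed $r$, yields both $L_\mu^+u\in L^\infty_{loc}(\Omega)$ and the stated inequality, with $C$ depending on $N,\Omega,r,\mu^+$.

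The main obstacle is the uniformity in $s\in(0,1)$: one must verify carefully that the constant $C$ does not blow up as $s\searrow 0$ or $s\nearrow 1$, which is precisely where the explicit form \eqref{defcNs} of $c_{N,s}$ and its $s(1-s)$ factor, together with the uniform bound $\overline\Gamma_N$ on the Gamma-function quotient, must be invoked. A secondary, routine point is the measurability and integrability in $s$ needed to interchange the order of integration (outer $\mu^+$-integral against the inner $y$-integral), which follows from Tonelli's theorem applied to the nonnegative integrands above, together with the finiteness assumptions on $\mu^+$ and $u\in C_\mu(\R^N)$.
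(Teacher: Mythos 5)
Your proposal is correct and follows essentially the same route as the paper's proof: the same split into the endpoint contributions $\mu^+(\{0\})u(x)$, $-\mu^+(\{1\})\Delta u(x)$ and the interior integral over $s\in(0,1)$, the same near/far decomposition at $B_r(x)$ with the second-order Taylor expansion on the near part, and the same use of the $s(1-s)$ factor in $c_{N,s}$ together with $\overline\Gamma_N$ to keep the constants uniform as $s\searrow0$ and $s\nearrow1$, plus the uniform comparison of $|y|^{-N-2s}$ with $(1+|x+y|^{N+2s})^{-1}$ on the far part. No gaps; the only (immaterial) cosmetic difference is that you write $(1+|z|)^{-N-2s}$ where the definition of $\|u\|_{L^1_{\mu^+}(\R^N)}$ uses $(1+|z|^{N+2s})^{-1}$, and these are comparable uniformly in $s$ with constants depending only on $N$.
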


\begin{proof}
Fix~$x\in \Omega$ and let~$r\in(0,1)$ be such that~$B_r(x)\subset \Omega$. Recalling the definitions in~\eqref{superpositionoperator+} and~\eqref{AMMU0}, we can write
\begin{equation}\label{pointwise1}
\begin{aligned}
L_\mu^+ u(x)&=\int_{(0,1)}c_{N,s}\int_{\R^N} \frac{2u(x) - u(x+y)-u(x-y)}{|y|^{N+2s}}\, dy\,d\mu^+(s)\\&\qquad\qquad
+\mu^+(\{0\})u(x)-\mu^+(\{1\})\Delta u(x)\\
&=\int_{(0,1)}c_{N,s}\int_{B_r} \frac{2u(x) - u(x+y)-u(x-y)}{|y|^{N+2s}}\, dy\,d\mu^+(s) \\
&\qquad\qquad +\int_{(0,1)}c_{N,s}\int_{\R^N\setminus B_r} \frac{2u(x) - u(x+y)-u(x-y)}{|y|^{N+2s}}\, dy\,d\mu^+(s)
\\&\qquad\qquad+\mu^+(\{0\})u(x)-\mu^+(\{1\})\Delta u(x).
\end{aligned}
\end{equation}
Now, when~$y\in B_r$, a second order Taylor expansion for both~$u(x+y)$ and~$u(x-y)$ gives that
\begin{equation}\label{tyuriefhdjbdnsy4u56473467}
\begin{split}
\Bigg|\int_{(0,1)}c_{N,s}&\int_{B_r} \frac{2u(x) - u(x+y)-u(x-y)}{|y|^{N+2s}}\, dy\,d\mu^+(s)\Bigg| \\
&\leq 2\|D^2 u\|_{L^\infty(B_r(x))}\int_{(0,1)}c_{N,s}\int_{B_r} \frac{dy}{|y|^{N+2s-2}} \,d\mu^+(s) \\
&=\omega_{N-1}\|D^2 u\|_{L^\infty(B_r(x))}\int_{(0,1)}\frac{c_{N,s} r^{2-2s}}{1-s} \,d\mu^+(s),
 \end{split}
\end{equation}
where~$\omega_{N-1}$ is the surface area of the unit sphere in~$\R^N$.

We now recall~\eqref{defcNs} and see that
\begin{eqnarray*}
\frac{c_{N,s}}{1-s} = \frac{2^{2s-1}s\,\Gamma\left(\frac{N+2s}{2}\right)}{\pi^{N/2}\,\Gamma(2-s)}\le
\frac{2\overline{\Gamma}_N}{\pi^{N/2}},
\end{eqnarray*} 
where the notation in~\eqref{gammas} has been used.
Exploiting this information into~\eqref{tyuriefhdjbdnsy4u56473467}, we thus obtain that
\begin{equation}\label{tyuriefhdjbdnsy4u564734672}\begin{split}&
\Bigg|\int_{(0,1)}c_{N,s}\int_{B_r} \frac{2u(x) - u(x+y)-u(x-y)}{|y|^{N+2s}}\, dy\,d\mu^+(s)\Bigg|\\&\qquad\qquad\le 
\frac{2\overline{\Gamma}_N\omega_{N-1}\mu^+([0,1])}{\pi^{N/2}}
\|D^2 u\|_{L^\infty(B_r(x))}.\end{split}
\end{equation}

Furthermore, up to a suitable change of variables, we can write
\begin{equation}\label{pointwise3}
\begin{split}&
\Bigg|\int_{(0,1)}c_{N,s}\int_{\R^N\setminus B_r} \frac{2u(x) - u(x+y)-u(x-y)}{|y|^{N+2s}}\, dy\,d\mu^+(s)\Bigg| \\
&\quad\leq 2|u(x)|\int_{(0,1)}c_{N,s}\int_{\R^N\setminus B_r} \frac{dy}{|y|^{N+2s}}\,d\mu^+(s)
+2\int_{(0,1)}c_{N,s}\int_{\R^N\setminus B_r} \frac{|u(x+y)|}{|y|^{N+2s}}\,dy\,d\mu^+(s).
\end{split}
\end{equation} 
We have that
\begin{equation*}
\int_{(0,1)}
c_{N,s}\int_{\R^N\setminus B_r} \frac{dy}{|y|^{N+2s}}\,d\mu^+(s)
=\omega_{N-1}\int_{(0,1)} \frac{c_{N,s}}{2s r^{2s}}\,d\mu^+(s) .
\end{equation*}
{F}rom~\eqref{defcNs} we see that
$$
\frac{c_{N,s}}{s} = \frac{2^{2s-1}(1-s)\,\Gamma\left(\frac{N+2s}{2}\right)}{\pi^{N/2}\,\Gamma(2-s)}\le
\frac{2\overline{\Gamma}_N}{\pi^{N/2}},$$
and thus
\begin{equation}\label{pointwise4}
\int_{(0,1)}
c_{N,s}\int_{\R^N\setminus B_r} \frac{dy}{|y|^{N+2s}}\,d\mu^+(s)
\le \frac{\overline{\Gamma}_N \omega_{N-1}\mu^+((0,1))
}{r^2\pi^{N/2}}
.\end{equation}

Now we claim that there exists~$C>0$
such that, for all~$y\in \R^N\setminus B_r$ and all~$s\in [0,1]$,
\begin{equation}\label{cftgvbhbhjjn}
\frac{1+|x+y|^{N+2s}}{|y|^{N+2s}} \le C.
\end{equation}
To check this, we define the function
\[
f(y,s):=\frac{1+|x+y|^{N+2s}}{|y|^{N+2s}}
\]
and we observe that
\[
\lim_{|y|\to +\infty}f(y,s)=1 \quad \mbox{uniformly for any }s\in [0,1].
\]
As a consequence,
there exists~$M>1$ such that, if~$|y|\geq M$, then~$f(y,s)\leq 2$.

Also, if~$y\in B_M\setminus B_r$, then
\[
f(y,s)\leq \frac{1+(|x|+M)^{N+2s}}{r^{N+2s}}\leq \frac{1+(|x|+M)^{N+2}}{r^{N+2}}.
\]
These considerations establish the claim in~\eqref{cftgvbhbhjjn}.

{F}rom the bound in~\eqref{cftgvbhbhjjn} we thus obtain that
\begin{eqnarray*}
\int_{(0,1)} c_{N,s}
\int_{\R^N\setminus B_r} \frac{|u(x+y)|}{|y|^{N+2s}}\,dy\,d\mu^+(s)
\le C\int_{(0,1)} c_{N,s}
\int_{\R^N\setminus B_r}\frac{|u(x+y)|}{1+|x+y|^{N+2s}}\,dy\,d\mu^+(s)
.\end{eqnarray*}
Hence, changing variable~$z:=x+y$ and recalling that~$u\in C_\mu(\R^N)$,
we find that
\begin{equation*}
\int_{(0,1)}
c_{N,s}\int_{\R^N\setminus B_r} \frac{|u(x+y)|}{|y|^{N+2s}}\,dy\,d\mu^+(s)
\leq C\int_{(0,1)}
c_{N,s}\int_{\R^N} \frac{|u(z)|}{1+|z|^{N+2s}}\,dz\,d\mu^+(s)=C\|u\|_{L^1_{\mu^+}(\R^N)}.
\end{equation*}

Plugging this and~\eqref{pointwise4} into~\eqref{pointwise3}, we thus
obtain that
\begin{equation*}\begin{split}&
\Bigg|\int_{(0,1)}c_{N,s}\int_{\R^N\setminus B_r} \frac{2u(x) - u(x+y)-u(x-y)}{|y|^{N+2s}}\, dy\,d\mu^+(s)\Bigg| \\
&\qquad\leq\frac{2\overline{\Gamma}_N\omega_{N-1}\mu^+((0,1))|u(x)|
}{r^2\pi^{N/2}}
+2C\|u\|_{L^1_{\mu^+}(\R^N)}.
\end{split}\end{equation*}

Combining this with~\eqref{pointwise1} and~\eqref{tyuriefhdjbdnsy4u564734672}, we conclude that
\begin{eqnarray*}
|L_\mu^+ u(x)|&\le& \frac{2\overline{\Gamma}_N\omega_{N-1}\mu^+((0,1))}{\pi^{N/2}}
\left( \|D^2 u\|_{L^\infty(B_r(x))} +\frac{|u(x)|
}{r^2}\right)
+2C\|u\|_{L^1_{\mu^+}(\R^N)}\\&&\qquad+\mu^+(\{0\})|u(x)|+\mu^+(\{1\})|\Delta u(x)|,
\end{eqnarray*}
which gives the desired result.
\end{proof}

We can now give the following:

\begin{definition}\label{xderfcvgtygbhu}
Let~$f:\Omega\to \R$ be any measurable function. We say that a function~$u\in C_\mu(\R^N)\cap C^2(\Omega)$ is a classical solution 
of problem~\eqref{probsoluzioni} if~$L_\mu^+ u(x)=f(x)$ for any~$x\in\Omega$ and~$u(x)=0$ for any~$x\in \R^N\setminus \Omega$.
\end{definition}

\begin{proof}[Proof of Theorem~\ref{strongmaximumprinciple}]
If~$\mu^+((0,1))=0$, then the problem boils down to the classical
maximum principle for the Laplace operator, see e.g.~\cite[Theorem~2 on page~329 and Theorem~4 on page~333]{MR1625845}, hence we can suppose
that~$\mu^+((0,1))>0$.

We want to prove that~$u\geq 0$ in~$\Omega$. For this, we assume by contradiction that there exists some~$\overline{x}\in \Omega$ 
such that~$u(\overline{x})<0$.
Then, we prove that
\begin{equation}\label{CANNO}
{\mbox{$u$ cannot have an interior minimum, unless it is constant.}}
\end{equation}
To this end, we assume by contradiction that~$u$ is not constant
and that it possesses a minimum at a point~$x_0\in \Omega$. This implies that~$\Delta u(x_0)\ge 0$ and $u(x_0)\leq u(\overline{x})<0$.
Moreover, for all~$s\in(0,1)$,
\begin{eqnarray*}
(-\Delta)^s u(x_0)= c_{N,s}\int_{\R^N} \frac{2u(x_0) - u(x_0+y)-u(x_0-y)}{|y|^{N+2s}}\, dy\le 0.
\end{eqnarray*}

Therefore, we have that
\begin{eqnarray*}&&
0\le L^+_\mu u(x_0) = 
\int_{[0, 1]} (-\Delta)^s u(x_0)\, d\mu^+(s)\\&&\qquad
=-\mu^+(\{1\})\Delta u(x_0)+\int_{(0, 1)} (-\Delta)^s u(x_0)\, d\mu^+(s)
+\mu^+(\{0\}) u(x_0)\\&&\qquad\le \int_{(0, 1)} (-\Delta)^s u(x_0)\, d\mu^+(s)\\&&\qquad
=\int_{(0, 1)}\left[ c_{N,s}\int_{\R^N} \frac{2u(x_0) - u(x_0+y)-u(x_0-y)}{|y|^{N+2s}}\, dy\right]\, d\mu^+(s)
\le0.
\end{eqnarray*}

Consequently, for almost every~$y\in\R^N$,
$$\int_{(0, 1)}c_{N,s} \frac{2u(x_0) - u(x_0+y)-u(x_0-y)}{|y|^{N+2s}}\,d\mu^+(s)=0.$$
As a result, since~$u$ is not constant (and therefore~$2u(x_0) - u(x_0+y)-u(x_0-y)$
is not identically null), we find that, for some~$y\in\R^N\setminus\{0\}$,
$$\int_{(0, 1)} \frac{c_{N,s}}{|y|^{2s}}\,d\mu^+(s)=0.$$
Since the integrand is strictly positive, we have thus derived a contradiction
and we have completed the proof of~\eqref{CANNO}.

Now, the statement in~\eqref{CANNO} and the fact that~$u(\overline{x})<0$ are in contradiction with
the assumption that~$u\ge0$ in~$\R^N\setminus\Omega$.
Hence, we conclude that~$u\ge0$ in~$\Omega$, which is~\eqref{2uge0Omega}.

Now we assume that there exists~$x_0\in \Omega$ such that~$u(x_0)=0$.
Since we already know that~$u(x)\geq 0$ for any~$x\in \R^N$, it follows that~$x_0$ is a minimum point
for~$u$. Hence, by~\eqref{CANNO},
we have that~$u$ vanishes identically. This completes the proof of Theorem~\ref{strongmaximumprinciple}.
\end{proof}

\section{The eigenvalue problem}\label{sectioneigenvalues}
We start this section by introducing some preliminary notations. 
We recall that the space~$X(\Omega)$ has been defined in~\eqref{XOmegadefn}. 

The weak formulation of the eigenvalue problem~\eqref{eigenvalueproblem} is given by
\begin{equation}\label{weakeigenvalue}
\begin{split}
&\int_{[0,1]}c_{N,s}\iint_{\R^{2N}}\frac{(u(x)-u(y))(v(x)-v(y))}{|x-y|^{N+2s}}\,dx\,dy\,d\mu^+(s)\\
&\quad -  \int_{[0,\overline s)}c_{N,s}\iint_{\R^{2N}}\frac{(u(x)-u(y))(v(x)-v(y))}{|x-y|^{N+2s}}\,dx\,dy\,d\mu^-(s) \\
&= \lambda\int_\Omega u(x) v(x)\, dx \qquad\mbox{ for any } v\in X(\Omega),
\end{split}
\end{equation}
where the ``abuse of notation'' in~\eqref{ocmrgexpresfyrj5784i} is assumed.

We recall that if there exists a nontrivial solution~$u\in X(\Omega)$ of~\eqref{weakeigenvalue}, then~$\lambda\in\R$ is called an eigenvalue of the operator~$L_\mu$. Any solution~$u\in X(\Omega)$ is called an eigenfunction associated with the eigenvalue~$\lambda$.

Furthermore, let~$I:X(\Omega)\to\R$ be the functional defined as
\begin{equation}\label{Ifunctional}
I(u):= \frac12 \int_{[0, 1]} [u]^2_s\, d\mu^+(s) - \frac12\int_{[0, \overline s)} [u]^2_s\, d\mu^-(s) = \frac12 \|u\|^2,
\end{equation}
where~$\|u\|$ is the norm given in~\eqref{norma+-}.

We recall the following result proved in~\cite[Lemma~3.1]{DPLSVlog}:

\begin{lemma}\label{lemmino1}
Let~$\mu$ satisfy~\eqref{mu0}, \eqref{mu1} and~\eqref{mu2}. 

Let~$X_0$ be a nonempty, weakly closed subspace of~$X(\Omega)$ and
\[
\mathscr M:=\big\{ u\in X_0\;{\mbox{ s.t. }}\; \|u\|_{L^2(\Omega)}=1\big\}.
\]

Then, there exists~$\gamma_0>0$, depending only on~$N$ and~$\Omega$,
such that if~$\gamma\in[0,\gamma_0]$ the following statements hold true.

There exists~$u_0\in\mathscr M$ such that
\begin{equation}\label{minimo1}
\min_{u\in\mathscr M} I(u) = I(u_0)>0.
\end{equation}

In addition, for any~$v\in X_0$,
\begin{equation}\label{minimo2}
\begin{split}
&\int_{[0,1]}c_{N,s}\iint_{\R^{2N}}\frac{(u_0(x)-u_0(y))(v(x)-v(y))}{|x-y|^{N+2s}}\,dx\,dy\,d\mu^+(s)\\
&\quad -  \int_{[0,\overline s)}c_{N,s}\iint_{\R^{2N}}\frac{(u_0(x)-u_0(y))(v(x)-v(y))}{|x-y|^{N+2s}}\,dx\,dy\,d\mu^-(s) = 2I(u_0)\int_\Omega u_0(x) v(x)\, dx.
\end{split}
\end{equation}
\end{lemma}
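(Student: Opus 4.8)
The plan is to run the direct method of the calculus of variations for the functional $I$ on the constraint set $\mathscr M$, and then to obtain~\eqref{minimo2} as the Euler--Lagrange equation of this constrained minimization. Throughout, take $\gamma_0>0$ to be the threshold given by Proposition~\ref{propnormeequiv}, so that for $\gamma\in[0,\gamma_0]$ the bilinear form in~\eqref{scalare+-} is a genuine scalar product on $X(\Omega)$ and $(1-c_0\gamma)^{1/2}\|u\|_X\le\|u\|\le\|u\|_X$; in particular $I(u)=\tfrac12\|u\|^2\ge0$ and $I$ is finite on $X(\Omega)$. Note that $\mathscr M\ne\emptyset$: if $X_0=\{0\}$ there is nothing to prove, otherwise pick $w\in X_0\setminus\{0\}$; since a function in $X(\Omega)$ that vanishes a.e.\ in $\Omega$ vanishes a.e.\ in $\R^N$, we get $\|w\|_{L^2(\Omega)}\ne0$, and $w/\|w\|_{L^2(\Omega)}\in\mathscr M$.

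First I would record the positivity of the infimum. By Proposition~\ref{embeddingsXOmega}, $X(\Omega)$ is continuously embedded in $L^2(\Omega)$, so there is $C=C(N,\Omega)>0$ with $\|u\|_{L^2(\Omega)}\le C\|u\|_X\le C\,(1-c_0\gamma)^{-1/2}\|u\|$ for all $u\in X(\Omega)$. Hence on $\mathscr M$ one has $\|u\|^2\ge (1-c_0\gamma)/C^2$, so $I(u)\ge \iota$ for some $\iota>0$, and $m:=\inf_{\mathscr M}I\in[\iota,+\infty)$. Next take a minimizing sequence $u_n\in\mathscr M$ with $I(u_n)\to m$. Then $\|u_n\|$ is bounded, hence so is $\|u_n\|_X$; since $X(\Omega)$ is a Hilbert space by~\eqref{XHilbert}, up to a subsequence $u_n\rightharpoonup u_0$ weakly in $X(\Omega)$, and $u_0\in X_0$ because $X_0$ is weakly closed. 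Moreover, since $N>2s_\sharp$ gives $2<2^*_{s_\sharp}$, the embedding $X(\Omega)\hookrightarrow\hookrightarrow L^2(\Omega)$ of Proposition~\ref{embeddingsXOmega} is compact, so $u_n\to u_0$ strongly in $L^2(\Omega)$, whence $\|u_0\|_{L^2(\Omega)}=1$ and $u_0\in\mathscr M$.

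The crucial step is the weak lower semicontinuity of $I$ along $(u_n)$, and this is the step I expect to be the main obstacle, because the negative term in $I$ a priori threatens lower semicontinuity. I would write $I(u)=\tfrac12\|u\|_X^2-\tfrac12\int_{[0,\overline s)}[u]_s^2\,d\mu^-(s)$: the first term is weakly lower semicontinuous on the Hilbert space $X(\Omega)$, while for the second term Lemma~\ref{lemmalimitemumeno} gives the \emph{exact limit} $\int_{[0,\overline s)}[u_n]_s^2\,d\mu^-(s)\to\int_{[0,\overline s)}[u_0]_s^2\,d\mu^-(s)$. Combining, $I(u_0)\le\liminf_n I(u_n)=m$; since $u_0\in\mathscr M$ also $I(u_0)\ge m$, so $I(u_0)=m>0$, which is~\eqref{minimo1}. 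In other words, the argument works only because the dangerous negative part of $I$ is not just lower semicontinuous but continuous under weak convergence, by Lemma~\ref{lemmalimitemumeno}, and the role of the smallness of $\gamma$ is confined to making $\|\cdot\|$ a genuine norm (equivalently $I\ge0$) so that $m>0$.

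It remains to derive~\eqref{minimo2}. Fix $v\in X_0$. For $|\varepsilon|$ small, $\|u_0+\varepsilon v\|_{L^2(\Omega)}\ge 1-|\varepsilon|\,\|v\|_{L^2(\Omega)}>0$, so $w_\varepsilon:=(u_0+\varepsilon v)/\|u_0+\varepsilon v\|_{L^2(\Omega)}\in\mathscr M$. Using the quadratic homogeneity $I(tu)=t^2I(u)$, the function
\[
g(\varepsilon):=I(w_\varepsilon)=\frac{\|u_0+\varepsilon v\|^2}{2\,\|u_0+\varepsilon v\|_{L^2(\Omega)}^2}
\]
is a rational, hence smooth, function of $\varepsilon$ near $0$ (the denominator being positive there), and it attains a minimum at $\varepsilon=0$ since $u_0$ minimizes $I$ over $\mathscr M$, so $g'(0)=0$. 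Expanding $\|u_0+\varepsilon v\|^2=\|u_0\|^2+2\varepsilon\langle u_0,v\rangle+\varepsilon^2\|v\|^2$ and $\|u_0+\varepsilon v\|_{L^2(\Omega)}^2=1+2\varepsilon\int_\Omega u_0(x)v(x)\,dx+\varepsilon^2\|v\|_{L^2(\Omega)}^2$, differentiating, and using $\|u_0\|^2=2I(u_0)$, one obtains
\[
0=g'(0)=\langle u_0,v\rangle-2I(u_0)\int_\Omega u_0(x)\,v(x)\,dx.
\]
By~\eqref{scalarepiu}, \eqref{scalaremeno} and~\eqref{scalare+-}, the quantity $\langle u_0,v\rangle=\langle u_0,v\rangle_+-\langle u_0,v\rangle_-$ is exactly the left-hand side of~\eqref{minimo2}, so the last display is~\eqref{minimo2}, which completes the proof.
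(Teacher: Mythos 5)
Your proof is correct and follows what is essentially the intended argument: the paper itself does not reprove this lemma (it cites~\cite{DPLSVlog}), but the route is exactly the one suggested by the surrounding material, namely the direct method with Proposition~\ref{propnormeequiv} fixing $\gamma_0$, the compact embedding of Proposition~\ref{embeddingsXOmega} preserving the $L^2$ constraint, Lemma~\ref{lemmalimitemumeno} giving continuity of the negative term so that $I$ is weakly lower semicontinuous, and the Euler--Lagrange equation obtained by differentiating $\varepsilon\mapsto I\bigl((u_0+\varepsilon v)/\|u_0+\varepsilon v\|_{L^2(\Omega)}\bigr)$ at $\varepsilon=0$. The only cosmetic point is the remark about $X_0=\{0\}$: in that case $\mathscr M=\emptyset$ and the statement is vacuous, so it is cleaner to say the lemma is only meaningful (and applied) when $X_0\neq\{0\}$, which is how you in fact use it.
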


The next two results provide some useful properties of the eigenfunctions and will be useful when dealing with problem~\eqref{eigenvalueproblem}.

\begin{lemma}\label{lemmino2}
Let~$\lambda_1$ and~$\lambda_2$ be two different eigenvalues of~\eqref{eigenvalueproblem} and let~$e_1$, $e_2\in X(\Omega)$ be associated eigenfunctions.

Then,
\[
\int_{[0,1]}c_{N,s}\iint_{\R^{2N}}\frac{(e_1(x)-e_1(y))(e_2(x)-e_2(y))}{|x-y|^{N+2s}}\,dx\,dy\,d\mu(s) =\int_\Omega e_1(x) e_2(x)\, dx=0.
\]
\end{lemma}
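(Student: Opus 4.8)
\textbf{Proof proposal for Lemma~\ref{lemmino2}.}

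The plan is the standard orthogonality-of-eigenfunctions argument, adapted to the bilinear form associated with~$L_\mu$. Write
\[
\mathcal B(u,v):=\int_{[0,1]}c_{N,s}\iint_{\R^{2N}}\frac{(u(x)-u(y))(v(x)-v(y))}{|x-y|^{N+2s}}\,dx\,dy\,d\mu(s),
\]
which, under the ``abuse of notation'' in~\eqref{ocmrgexpresfyrj5784i}, equals~$\langle u,v\rangle_+-\langle u,v\rangle_-=\langle u,v\rangle$. The key structural fact is that~$\mathcal B$ is symmetric (immediate from the symmetry of the integrand in~$u$ and~$v$) and that the weak formulation~\eqref{weakeigenvalue} says precisely that an eigenfunction~$e_i$ with eigenvalue~$\lambda_i$ satisfies~$\mathcal B(e_i,v)=\lambda_i\int_\Omega e_i\,v\,dx$ for every test function~$v\in X(\Omega)$. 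Since~$e_1,e_2\in X(\Omega)$, both are admissible test functions.

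First I would test the weak equation for~$e_1$ against~$v=e_2$, obtaining~$\mathcal B(e_1,e_2)=\lambda_1\int_\Omega e_1 e_2\,dx$. Then I would test the weak equation for~$e_2$ against~$v=e_1$, obtaining~$\mathcal B(e_2,e_1)=\lambda_2\int_\Omega e_2 e_1\,dx$. By the symmetry of~$\mathcal B$ the two left-hand sides coincide, so subtracting gives~$(\lambda_1-\lambda_2)\int_\Omega e_1 e_2\,dx=0$. Since~$\lambda_1\ne\lambda_2$ by hypothesis, we conclude~$\int_\Omega e_1 e_2\,dx=0$, and then feeding this back into either tested equation yields~$\mathcal B(e_1,e_2)=0$ as well. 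This gives exactly the two asserted equalities.

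The one point requiring a little care, rather than a genuine obstacle, is the legitimacy of testing with~$e_1$ and~$e_2$ themselves: the weak formulation~\eqref{weakeigenvalue} is stated for test functions in~$X(\Omega)$, and eigenfunctions are by definition elements of~$X(\Omega)$, so this is automatic; one only needs to note that all the bilinear expressions involved are finite, which follows from the Cauchy–Schwarz inequality in~$X(\Omega)$ together with the continuity of the form (using Proposition~\ref{crucial} to control the negative part by the positive part, equivalently Proposition~\ref{propnormeequiv}). I would also briefly remark that the decomposition into~$\mu^+$ and~$\mu^-$ pieces converges absolutely for functions in~$X(\Omega)$, so that rearranging~$\mathcal B$ as~$\langle\cdot,\cdot\rangle_+-\langle\cdot,\cdot\rangle_-$ and invoking symmetry of each piece separately is justified. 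With these remarks the proof is essentially two lines.
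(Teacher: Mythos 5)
Your argument is correct and coincides with the paper's own proof: test the weak formulation for $e_1$ against $e_2$ and vice versa, use symmetry of the bilinear form to deduce $(\lambda_1-\lambda_2)\int_\Omega e_1e_2\,dx=0$, conclude orthogonality in $L^2(\Omega)$ from $\lambda_1\neq\lambda_2$, and substitute back to get the vanishing of the form. The additional remarks on admissibility of the test functions and finiteness of the form are fine but not needed beyond what the paper already assumes.
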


\begin{proof}
Up to a normalization,
we can assume that~$\|e_1\|_{L^2(\Omega)}=\|e_2\|_{L^2(\Omega)}=1$.

Testing the equation in~\eqref{weakeigenvalue} for~$e_1$ against~$e_2$, we get
\begin{equation}\label{e1vse2}
\int_{[0,1]}c_{N,s}\iint_{\R^{2N}}\frac{(e_1(x)-e_1(y))(e_2(x)-e_2(y))}{|x-y|^{N+2s}}\,dx\,dy\,d\mu(s)
=\lambda_1 \int_\Omega e_1(x) e_2(x)\, dx.
\end{equation}
Similarly, testing~\eqref{weakeigenvalue} for~$e_2$ against~$e_1$, we obtain that
\begin{equation}\label{e2vse1}
\int_{[0,1]}c_{N,s}\iint_{\R^{2N}}\frac{(e_2(x)-e_2(y))(e_1(x)-e_1(y))}{|x-y|^{N+2s}}\,dx\,dy\,d\mu(s) =\lambda_2 \int_\Omega e_1(x) e_2(x)\, dx.
\end{equation}
Combining~\eqref{e1vse2} and~\eqref{e2vse1}, we infer that
\[
(\lambda_1-\lambda_2) \int_\Omega e_1(x) e_2(x)\, dx=0.
\]
As a consequence, since~$\lambda_1\neq\lambda_2$,
\[
\int_\Omega e_1(x) e_2(x)\, dx=0.
\]
Using this information into~\eqref{e1vse2} (or equivalently~\eqref{e2vse1}), 
we complete the proof.
\end{proof}

\begin{lemma}\label{lemmino3}
Let~$\lambda$ be an eigenvalue of~\eqref{eigenvalueproblem} and let~$e\in X(\Omega)$ be an associated eigenfunction.

Then,
\[
\int_{[0,1]}c_{N,s}\iint_{\R^{2N}}\frac{|e(x)-e(y)|^2}{|x-y|^{N+2s}}\,dx\,dy\,d\mu(s)=\lambda \|e\|^2_{L^2(\Omega)}.
\]
\end{lemma}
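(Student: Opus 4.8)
The plan is to use the eigenfunction $e$ itself as a test function in the weak formulation~\eqref{weakeigenvalue}. Since $e\in X(\Omega)$ by hypothesis, it is an admissible choice of $v$, and all the bilinear expressions occurring in~\eqref{weakeigenvalue} are finite: the $\mu^+$-term is bounded by $\|e\|_X^2<+\infty$ by the very definition of the norm in~\eqref{normadefinizione}, while the $\mu^-$-term is controlled either by Proposition~\ref{crucial} or, more directly, by Lemma~\ref{nons} together with~\eqref{mu2}. This is the same mechanism already exploited in the proof of Lemma~\ref{lemmino2}.

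Next I would substitute $v=e$ into~\eqref{weakeigenvalue}. The right-hand side immediately becomes $\lambda\int_\Omega e(x)^2\,dx=\lambda\|e\|^2_{L^2(\Omega)}$, which is already the desired right-hand side. On the left-hand side, the first term becomes
\[
\int_{[0,1]}c_{N,s}\iint_{\R^{2N}}\frac{|e(x)-e(y)|^2}{|x-y|^{N+2s}}\,dx\,dy\,d\mu^+(s),
\]
and the second term becomes the analogous quadratic expression integrated against $d\mu^-(s)$ over $[0,\overline s)$.

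Finally, I would merge these two pieces into a single integral against the signed measure $\mu=\mu^+-\mu^-$. Here one invokes~\eqref{mu1}, which says that $\mu^-$ is supported in $[0,\overline s)$, so that the $\mu^-$-integral over $[0,\overline s)$ coincides with the $\mu^-$-integral over $[0,1]$; subtracting it from the $\mu^+$-integral over $[0,1]$ then yields $\int_{[0,1]}(\cdots)\,d\mu(s)$, with the usual abuse of notation~\eqref{ocmrgexpresfyrj5784i} at the endpoints $s\in\{0,1\}$ understood throughout. This produces exactly the claimed identity. There is no real obstacle in the argument; the only point requiring a little care is the bookkeeping between the $\mu^+$, $\mu^-$ and $\mu$ integrals together with the interpretation of the endpoint contributions.
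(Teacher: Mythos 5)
Your proposal is correct and coincides with the paper's argument, which simply tests~\eqref{weakeigenvalue} for~$e$ against~$e$ itself; the extra remarks about finiteness of the bilinear forms and the merging of the $\mu^+$ and $\mu^-$ integrals via~\eqref{mu1} are just the routine bookkeeping left implicit in the paper.
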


\begin{proof}
The desired result plainly follows by testing the equation~\eqref{weakeigenvalue} for~$e$ against itself.
\end{proof}
We now establish some properties of eigenvalues and eigenfunctions for
the operator~$L_{\mu}$ of classical flavour.
The proofs are inspired by~\cite[Proposition~9]{MR3002745},
where the case of the fractional Laplacian~$(-\Delta)^s $ is taken into account.

We first prove that the set of eigenvalues of problem~\eqref{eigenvalueproblem} consists of a sequence~$\lambda_k$ which diverges as~$k$ goes to infinity:

\begin{proposition}\label{propde}
Let~$\mu$ satisfy~\eqref{mu0}, \eqref{mu1} and \eqref{mu2}. Take $N> 2\overline s$.

Then, there exists~$\gamma_0>0$, depending only on~$N$ and~$\Omega$, such that if~$\gamma\in[0,\gamma_0]$ the following statements hold true.

There exists a sequence~$\lambda_k$ of eigenvalues of problem~\eqref{eigenvalueproblem} such that
\begin{equation}\label{unboundedsequence}
0<\lambda_{1} \le \lambda_2\le \cdots\le \lambda_k \le\cdots
\end{equation}
and
\begin{equation}\label{unboundedeigenvalues}
\lim_{k\to +\infty} \lambda_k = +\infty.
\end{equation}

In addition, for any~$k\in \N$ with~$k\ge1$, the eigenvalues~$\lambda_{k+1}$
are given recursively by
\begin{equation}\label{kautovalore}
\lambda_{k+1}=\min_{u\in E_{k+1}\setminus \{0\}} \dfrac{\displaystyle\int_{[0, 1]} [u]^2_s \, d\mu^+(s)-\int_{[0, \overline s)} [u]^2_s \, d\mu^-(s)}{\displaystyle\int_\Omega |u(x)|^2 \,dx},
\end{equation}
where~$E_1:= X(\Omega)$ and, for all~$k\ge1$,
\begin{equation}\label{Ek+1definition}
E_{k+1}:=\big\{u\in X(\Omega) \;{\mbox{ s.t. }}\; \langle u, e_j\rangle=0 \ \mbox{ for all } j=1,\dots,k\big\}.
\end{equation}

Moreover, for any~$k\in\N$ with~$k\ge1$,
the function~$e_{k+1}\in E_{k+1}$ is an eigenfunction corresponding to the eigenvalue~$\lambda_{k+1}$ and achieves the minimum in~\eqref{kautovalore}.
\end{proposition}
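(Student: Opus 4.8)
The plan is to construct the eigenvalues and eigenfunctions recursively via a constrained minimization of the Rayleigh-type quotient, using the spectral tools already assembled. First I would appeal to Proposition~\ref{propnormeequiv} to fix $\gamma_0>0$ so that $\langle\cdot,\cdot\rangle$ is a genuine scalar product on $X(\Omega)$ equivalent to $\langle\cdot,\cdot\rangle_+$; shrinking $\gamma_0$ further if needed, Lemma~\ref{lemmino1} applies with the same threshold. I would then start the induction by taking $X_0=E_1=X(\Omega)$ in Lemma~\ref{lemmino1}: this is weakly closed (it is the whole Hilbert space), so there is a minimizer $e_1$ of $I$ over $\mathscr M=\{u\in X(\Omega):\|u\|_{L^2(\Omega)}=1\}$, and $\lambda_1:=2I(e_1)=\|e_1\|^2>0$, with~\eqref{minimo2} saying exactly that $e_1$ solves~\eqref{weakeigenvalue} with $\lambda=\lambda_1$. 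Here the minimum of the quotient in~\eqref{kautovalore} over $E_1$ equals $2\min_{\mathscr M}I$ by homogeneity, since every nonzero $u$ can be rescaled to lie in $\mathscr M$.

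For the inductive step, suppose $e_1,\dots,e_k$ and $\lambda_1,\dots,\lambda_k$ have been built. I would set $E_{k+1}$ as in~\eqref{Ek+1definition}, observe it is a closed subspace of $X(\Omega)$ of finite codimension, hence weakly closed, and nonempty, and apply Lemma~\ref{lemmino1} again with $X_0=E_{k+1}$ to obtain $e_{k+1}\in E_{k+1}$ with $\|e_{k+1}\|_{L^2(\Omega)}=1$ minimizing $I$ over $\mathscr M\cap E_{k+1}$; again $\lambda_{k+1}:=2I(e_{k+1})$ is given by~\eqref{kautovalore} by homogeneity. The delicate point is to check that $e_{k+1}$ is actually an eigenfunction of the original problem, i.e.\ that~\eqref{weakeigenvalue} holds against \emph{all} $v\in X(\Omega)$, not only against $v\in E_{k+1}$: relation~\eqref{minimo2} only gives the Euler--Lagrange identity for test functions in $E_{k+1}$. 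To bridge this I would decompose an arbitrary $v\in X(\Omega)$ as $v=w+\sum_{j=1}^k c_j e_j$ with $w\in E_{k+1}$, using that $\langle\cdot,\cdot\rangle$ is a scalar product and the $e_j$ span a $k$-dimensional subspace; then check that the bilinear form $\langle e_{k+1},e_j\rangle$ vanishes for $j\le k$ — which is immediate since $e_{k+1}\in E_{k+1}$ — and that $\int_\Omega e_{k+1}e_j\,dx=0$, which follows from Lemma~\ref{lemmino2} once we know $\lambda_{k+1}\neq\lambda_j$, or, in the case of equal eigenvalues, requires a small separate argument (see below). Granting the orthogonality of the $L^2$-pairings, testing~\eqref{minimo2} against $w$ and using linearity recovers~\eqref{weakeigenvalue} for $v$. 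Monotonicity $\lambda_k\le\lambda_{k+1}$ is then clear because $E_{k+1}\subseteq E_k$, so the minimum over the smaller set is at least as large.

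The main obstacle, and the part that needs genuine work, is proving divergence $\lambda_k\to+\infty$. The standard strategy I would follow is: if, for contradiction, $\lambda_k\le \Lambda$ for all $k$, then the normalized eigenfunctions $e_k$ satisfy $\|e_k\|^2=\lambda_k\le\Lambda$, hence $\|e_k\|_X\le (1-c_0\gamma)^{-1/2}\Lambda^{1/2}$ is bounded in $X(\Omega)$ by Proposition~\ref{propnormeequiv}; by the compact embedding $X(\Omega)\hookrightarrow L^2(\Omega)$ from Proposition~\ref{embeddingsXOmega} (valid since $2<2^*_{s_\sharp}$), a subsequence of $e_k$ converges strongly in $L^2(\Omega)$. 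But the $e_k$ are pairwise $L^2$-orthonormal, so $\|e_k-e_m\|_{L^2(\Omega)}^2=2$ for $k\neq m$, contradicting the Cauchy property of the convergent subsequence. This forces $\lambda_k\to+\infty$. To make the orthonormality claim airtight I would note: distinct eigenvalues give $L^2$-orthogonality by Lemma~\ref{lemmino2}; equal consecutive eigenvalues $\lambda_k=\lambda_{k+1}$ are handled by the constraint defining $E_{k+1}$ together with the decomposition argument above — concretely, one shows that if $\lambda_{k+1}$ were attained by a function not $L^2$-orthogonal to the earlier $e_j$, one could modify it within $E_{k+1}$ to lower $I$, or else directly Gram--Schmidt the finite-dimensional eigenspace, which does not affect the $X$-orthogonality relations since they are preserved under linear combinations of eigenfunctions sharing the same eigenvalue (using Lemma~\ref{lemmino3} and bilinearity). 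Finally, positivity $\lambda_1>0$ is inherited at every stage from~\eqref{minimo1}, so the whole sequence is strictly positive, completing the proof.
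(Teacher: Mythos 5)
Your proposal follows essentially the same route as the paper: recursive application of Lemma~\ref{lemmino1} on the weakly closed subspaces $E_{k+1}$, the decomposition $X(\Omega)=\mathrm{span}\{e_1,\dots,e_k\}\oplus E_{k+1}$ to upgrade the Euler--Lagrange identity from test functions in $E_{k+1}$ to all of $X(\Omega)$, monotonicity from the nesting $E_{k+1}\subseteq E_k$, and divergence via compactness of $X(\Omega)\hookrightarrow L^2(\Omega)$ against the pairwise $L^2$-orthonormality of the $e_k$. Two remarks. First, your handling of the key orthogonality $\int_\Omega e_{k+1}e_j\,dx=0$ is more convoluted than necessary: you invoke Lemma~\ref{lemmino2} for distinct eigenvalues and then gesture at a ``small separate argument'' (modifying the minimizer, or Gram--Schmidt) for repeated ones, which as written is somewhat circular since the decomposition argument is precisely what needs this orthogonality. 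The clean step, which the paper uses and which requires no case distinction, is to test the inductively established weak equation for $e_j$ against $v=e_{k+1}$: this gives $\lambda_j\int_\Omega e_j e_{k+1}\,dx=\langle e_j,e_{k+1}\rangle=0$ by the very definition of $E_{k+1}$, and $\lambda_j>0$ forces the $L^2$-pairing to vanish, whether or not $\lambda_j=\lambda_{k+1}$. Second, the paper's proof also establishes that \emph{every} eigenvalue of~\eqref{eigenvalueproblem} occurs in the sequence $\lambda_k$ (its claim~\eqref{claimogniautovalore}, proved by sandwiching a putative extra eigenvalue $\lambda$ strictly between consecutive $\lambda_k$'s and deriving a contradiction with Lemma~\ref{lemmino2}); your proposal does not address this exhaustiveness, so if the recursive formula~\eqref{kautovalore} is read as a characterization of the spectrum, that part is missing and should be added.
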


\begin{proof}
The existence of the first eigenvalue is guaranteed by~\cite[Theorem~1.1]{DPLSVlog}. Hence, we now focus on the statements
for~$\lambda_{k+1}$ with~$k\ge1$.

We observe that 
\begin{equation}\label{weaklyclosed}
E_{k+1} \mbox{ is a weakly closed subspace of~$X(\Omega)$}.
\end{equation}
Indeed, let~$u_n\subset E_{k+1}$ be such that 
\[
\lim_{n\to +\infty} \langle u_n, v\rangle = \langle u, v\rangle \quad\mbox{ for any } v\in X(\Omega).
\]
Now, for any~$j=1,\dots,k$, we have that
\[
0= \lim_{n\to +\infty} \langle u_n, e_j\rangle = \langle u, e_j\rangle,
\]
which gives that~$u\in E_{k+1}$. This proves~\eqref{weaklyclosed}.

In light of~\eqref{weaklyclosed}, we can apply Lemma~\ref{lemmino1} with
$$X_0=: E_{k+1}\qquad{\mbox{and}}\qquad
\mathscr M:=\big\{u\in E_{k+1} \;{\mbox{ s.t. }}\; \|u\|_{L^2(\Omega)}=1\big\}.$$
Accordingly, formula~\eqref{minimo1} gives that the minimum in~\eqref{kautovalore} is attained at some~$e_{k+1}\in E_{k+1}$, with~$\|e_{k+1}\|_{L^2(\Omega)}=1$.

Now, by~\eqref{Ek+1definition}, we deduce that~$E_{k+1}\subseteq E_k\subseteq X(\Omega)$, and therefore~$\lambda_k\le\lambda_{k+1}$.  Thus, \eqref{unboundedsequence},  is proved.


In order to show that~$\lambda_{k+1}$ defined in~\eqref{kautovalore} is an eigenvalue of problem~\eqref{eigenvalueproblem} with corresponding eigenfunction~$e_{k+1}$, we need to show that
\begin{equation}\label{eigenfunctioninX}
\begin{split}
&\int_{[0,1]}c_{N,s}\iint_{\R^{2N}}\frac{(e_{k+1}(x)-e_{k+1}(y))(v(x)-v(y))}{|x-y|^{N+2s}}\,dx\,dy\,d\mu(s)\\
&\quad= \lambda_{k+1}\int_\Omega e_{k+1}(x) v(x)\, dx \qquad\mbox{ for any } v\in X(\Omega).
\end{split}
\end{equation}
To this end, we perform an inductive argument. 
We observe that if~$k=0$, then the desired result follows from~\cite[Theorem~1.1]{DPLSVlog}. Now, we assume that~\eqref{eigenfunctioninX} holds true
for~$k$ and we prove that it is true for~$k+1$ as well.

We write
\[
X(\Omega)= \mbox{span}\{e_1,\dots,e_k\}\oplus E_{k+1}.
\]
Accordingly, for any~$v\in X(\Omega)$, there exist~$a_1,\dots,a_k\in\R$ and~$v_2\in E_{k+1}$ such that
\[
v=v_1 + v_2 = \sum_{j=1}^k a_j e_j + v_2.
\]
Thus, by testing~\eqref{weakeigenvalue} for $u:=e_{k+1}$ against~$v_2=v-v_1\in E_{k+1}$, we get that
\begin{equation}\label{tfygubhinjokm}
\begin{split}
&\langle e_{k+1}, v\rangle -\lambda_{k+1}\int_\Omega e_{k+1}(x) v(x)\, dx= \langle e_{k+1}, v_1\rangle -\lambda_{k+1}\int_\Omega e_{k+1}(x) v_1(x)\, dx\\
&\qquad=\sum_{j=1}^k a_j  \langle e_{k+1}, e_j\rangle -\lambda_{k+1}\sum_{j=1}^k a_j \int_\Omega e_{k+1}(x) e_j(x)\, dx.
\end{split}
\end{equation}

Moreover, in light of the inductive assumption, we can test~\eqref{eigenfunctioninX} for~$e_j$ against~$e_{k+1}$ for any~$j=1,\dots, k$.  Thus, recalling~\eqref{Ek+1definition}, we obtain that, for any~$j=1, \dots, k$,
\[
\lambda_j \int_\Omega e_j(x)e_{k+1}(x) \, dx = \langle e_j, e_{k+1}\rangle= 0 .
\]
Since~$\lambda_j>0$ we thereby find that, for any~$j=1, \dots, k$,
\begin{equation*}
\int_\Omega  e_j(x)e_{k+1}(x)\, dx = \langle e_j, e_{k+1}\rangle= 0 .
\end{equation*}
{F}rom this and~\eqref{tfygubhinjokm}, we deduce that
\[
\langle e_{k+1}, v\rangle =\lambda_{k+1}\int_\Omega e_{k+1}(x) v(x)\, dx \quad\mbox{ for any }v\in X(\Omega),
\]
and therefore~\eqref{eigenfunctioninX} holds true for~$k+1$. This concludes
the inductive argument.

We now focus on the proof of~\eqref{unboundedeigenvalues}. Let~$j$, $k\in\N$ with~$j\ne k$.  Without loss of generality, we assume that~$k>j$. By testing~\eqref{weakeigenvalue} for the eigenfunction~$e_k$ against~$e_j$, we see that
\begin{equation}\label{uygbhijn}
\langle e_k, e_j\rangle = \lambda_k \int_\Omega e_k(x) e_j(x)\, dx.
\end{equation}
Moreover, since~$e_k\in E_k$ and~$k-1\ge j$, by~\eqref{Ek+1definition} we deduce that~$\langle e_k, e_j\rangle_+ =0$. Since~$\lambda_k>0$,
from this fact and~\eqref{uygbhijn} we thus infer that
\begin{equation}\label{ftygubhnj}
\langle e_k, e_j\rangle=0 = \int_\Omega e_k(x) e_j(x)\, dx \quad\mbox{ for any } k, j\in\N,  \, k\ne j.
\end{equation}

Now, we assume by contradiction that there exists~$c\in\R$ such that 
\begin{equation}\label{contradictionbounded}
\lim_{k\to +\infty} \lambda_k = c.
\end{equation}
In particular, $\lambda_k$ is a bounded sequence in~$\R$. Moreover, by Lemma~\ref{lemmino3} and~\eqref{norma+-}, we get that
\[
\|e_k\|^2 = \int_{[0, 1]} [e_k]^2_s \, d\mu(s) = \lambda_k \|e_k\|^2_{L^2(\Omega)} = \lambda_k.
\]
{F}rom this and~\eqref{contradictionbounded}, we infer that~$e_k$ is a bounded sequence in~$X(\Omega)$. Hence, by Propositions~\ref{embeddingsXOmega} and~\ref{propnormeequiv}, there exist a subsequence~$e_{k_j}$ and~$e_0\in L^2(\Omega)$ such that
\[
{\mbox{$e_{k_j}\to e_0$ in~$L^2(\Omega)$ as~$j\to+\infty$.}} 
\]
In particular, $e_{k_j}$ is a Cauchy sequence in~$L^2(\Omega)$, that is for any~$\varepsilon>0$ there exists~$N_\varepsilon >0$ such that,
if~$i$, $ j\ge N_\varepsilon$, then~$\|e_{k_j} - e_{k_i}\|_{L^2(\Omega)}\le\varepsilon$.

On the other hand, by~\eqref{ftygubhnj}, we have that,
for any~$i$, $j\in\N$ with~$i\ne j$, 
\[
\|e_{k_j} - e_{k_i}\|^2_{L^2(\Omega)} = \int_\Omega (e_{k_j} - e_{k_i})(e_{k_j} - e_{k_i}) \,dx = \int_{\Omega} |e_{k_j}|^2 \,dx + \int_{\Omega} |e_{k_i}|^2 \,dx = 2.
\]
This is a contradiction, which establishes~\eqref{unboundedeigenvalues}.

It remains to show that 
\begin{equation}\label{claimogniautovalore}
\mbox{ any eigenvalue of problem~\eqref{eigenvalueproblem} can be written as in~\eqref{kautovalore}.}
\end{equation}
To this aim, we assume by contradiction that there exists an eigenvalue~$\lambda\ne\lambda_k$ for any~$k\in\N$ with corresponding eigenvalue~$e$, normalized in such a way that~$\|e\|_{L^2(\Omega)}=1$. Thus, by Lemma~\ref{lemmino3} and~\eqref{Ifunctional}, 
\begin{equation}\label{gyvbhuijnmk}
2 I(e) = \|e\|^2 = \lambda.
\end{equation}
This, together with Lemma~\ref{lemmino1}, guarantees that
\[
\lambda_1 = 2 I(e_{1})\le 2 I(e) =\lambda.
\]

Now, we show that there exists~$k\in\N$ such that 
\begin{equation}\label{lambdaminorestretto}
\lambda_k < \lambda <\lambda_{k+1}.
\end{equation}
Indeed, by~\eqref{unboundedeigenvalues} there exists~$\widetilde k\in\N$ such that~$\lambda<\lambda_{\widetilde k}$. Moreover, by~\eqref{unboundedsequence} there exists~$h< \widetilde k$ such that
\[
\lambda_{\widetilde k -h}<\lambda< \lambda_{\widetilde k -h+1}.
\]
By taking~$k:=\widetilde k -h$, we get~\eqref{lambdaminorestretto}.

Now, we observe that
\begin{equation}\label{claimnotinEk+1}
e\notin E_{k+1}.
\end{equation}
Indeed, if by contradiction~$e\in E_{k+1}$, then by~\eqref{kautovalore} and~\eqref{gyvbhuijnmk}, we would have that~$ \lambda=2I(e)\ge\lambda_{k+1}$, which contradicts~\eqref{lambdaminorestretto}.

In light of~\eqref{claimnotinEk+1}, we have that
there exists~$j\in\{1,\dots, k\}$ such that~$\langle e, e_j\rangle \ne 0$, which is in contradiction with Lemma~\ref{lemmino2}. This 
completes the proof of~\eqref{claimogniautovalore}, as desired.
\end{proof}

In the next result we show that the
sequence of eigenfunctions~$e_k$ corresponding to the eigenvalues~$\lambda_k$ forms an orthonormal basis of~$L^2(\Omega)$ and an orthogonal basis of~$X(\Omega)$.

\begin{proposition}\label{propf}
Let~$\mu$ satisfy~\eqref{mu0}, \eqref{mu1} and~\eqref{mu2}. Take $N> 2\overline s$.
Let~$\lambda_k$ be the sequence of eigenvalues given by Proposition~\ref{propde}. 

Then, the sequence~$e_k$ of the corresponding eigenfunctions is an orthonormal basis of~$L^2(\Omega)$ and an orthogonal basis of~$X(\Omega)$.
\end{proposition}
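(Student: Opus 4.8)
The plan is to establish the two claims separately, starting with the easier one. First I would show that $\{e_k\}$ is an \emph{orthonormal} system in $L^2(\Omega)$: normalization $\|e_k\|_{L^2(\Omega)}=1$ was built into the construction in Proposition~\ref{propde}, and orthogonality $\int_\Omega e_k e_j\,dx=0$ for $k\ne j$ is exactly~\eqref{ftygubhnj}. Similarly, $\{e_k\}$ is an \emph{orthogonal} system in $X(\Omega)$ (equivalently, with respect to the equivalent scalar product $\langle\cdot,\cdot\rangle$ of Proposition~\ref{propnormeequiv}): this too is~\eqref{ftygubhnj}, which records $\langle e_k,e_j\rangle=0$ for $k\ne j$. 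So the real content is \emph{completeness}.

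The key step is to prove that $\{e_k\}$ is complete in $L^2(\Omega)$. I would argue by contradiction (mirroring the scheme in~\cite[Proposition~9]{MR3002745}): suppose there is $w\in L^2(\Omega)$, $w\ne0$, orthogonal in $L^2(\Omega)$ to every $e_k$. Since $X(\Omega)$ is dense in $L^2(\Omega)$ (the completion of $C^\infty_c(\Omega)$ in a norm controlling $\|\cdot\|_{L^2(\Omega)}$ by Lemma~\ref{nons}), it suffices to reach a contradiction when one tries to approximate $w$; more efficiently, one shows directly that the orthogonal complement of $\overline{\mathrm{span}}\{e_k\}$ in $X(\Omega)$ is trivial. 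Concretely: set $V_k:=\bigcap_{j\le k}\{u\in X(\Omega):\langle u,e_j\rangle=0\}=E_{k+1}$, a nested sequence of weakly closed subspaces with $\bigcap_k V_k=:V_\infty$. On $V_\infty$ the Rayleigh quotient in~\eqref{kautovalore} is bounded below by $\lambda_{k+1}$ for every $k$, hence (by~\eqref{unboundedeigenvalues}) it is $+\infty$ on $V_\infty\setminus\{0\}$, which forces $V_\infty=\{0\}$, i.e. $\{e_k\}$ is complete in the Hilbert space $(X(\Omega),\langle\cdot,\cdot\rangle)$. Then, given any $w\in L^2(\Omega)$ and any $\varepsilon>0$, pick $\varphi\in X(\Omega)$ with $\|w-\varphi\|_{L^2(\Omega)}<\varepsilon$; writing the partial sums $\varphi_n:=\sum_{k=1}^n \frac{\langle\varphi,e_k\rangle}{\|e_k\|^2}e_k\to\varphi$ in $X(\Omega)$, hence (again by Lemma~\ref{nons}, $\|\cdot\|_{L^2(\Omega)}\le C\|\cdot\|$ after using Proposition~\ref{propnormeequiv}) in $L^2(\Omega)$, and noting $\langle\varphi,e_k\rangle=\lambda_k\int_\Omega\varphi e_k\,dx$ so that $\varphi_n$ is also the $L^2$-partial sum $\sum_{k=1}^n(\int_\Omega\varphi e_k\,dx)\,e_k$, we conclude that finite linear combinations of the $e_k$ are dense in $L^2(\Omega)$. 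This gives the orthonormal basis property in $L^2(\Omega)$.

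It remains to upgrade to an \emph{orthogonal basis of $X(\Omega)$}. Given $u\in X(\Omega)$, consider $u_n:=\sum_{k=1}^n c_k e_k$ with $c_k:=\langle u,e_k\rangle/\|e_k\|^2$. Since the $e_k$ are $\langle\cdot,\cdot\rangle$-orthogonal, $\{u_n\}$ is Cauchy in $X(\Omega)$ iff $\sum_k |c_k|^2\|e_k\|^2<\infty$, and Bessel's inequality in the Hilbert space $(X(\Omega),\langle\cdot,\cdot\rangle)$ gives $\sum_k|c_k|^2\|e_k\|^2\le\|u\|^2$; hence $u_n\to u_\infty$ in $X(\Omega)$ for some $u_\infty$. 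By continuity of the embedding $X(\Omega)\hookrightarrow L^2(\Omega)$, $u_n\to u_\infty$ in $L^2(\Omega)$ as well; but in $L^2(\Omega)$, using $\langle u,e_k\rangle=\lambda_k\int_\Omega u e_k\,dx$ and Lemma~\ref{lemmino3} ($\|e_k\|^2=\lambda_k$), one checks $c_k e_k=(\int_\Omega u e_k\,dx)e_k$, so $u_n$ is the $L^2$-Fourier partial sum of $u$, which converges to $u$ by the $L^2$-basis property just established. Therefore $u_\infty=u$, i.e. $u=\sum_k\langle u,e_k\rangle\|e_k\|^{-2}e_k$ in $X(\Omega)$, proving $\{e_k\}$ is an orthogonal basis of $X(\Omega)$.

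The main obstacle is the completeness argument, specifically making rigorous that $\bigcap_k E_{k+1}=\{0\}$: one must be careful that $\lambda_{k+1}$ is defined as an infimum over $E_{k+1}\setminus\{0\}$, so any nonzero $u\in\bigcap_k E_{k+1}$ satisfies $\|u\|^2/\|u\|_{L^2(\Omega)}^2\ge\lambda_{k+1}\to+\infty$, which is impossible since the left side is a fixed finite number (finiteness of $\|u\|_{L^2(\Omega)}$ is guaranteed by Lemma~\ref{nons} / Proposition~\ref{embeddingsXOmega}, and $\|u\|_{L^2(\Omega)}\ne0$ because $u\ne0$ in $X(\Omega)$ implies $u\ne0$ a.e.). The bookkeeping linking the $\langle\cdot,\cdot\rangle$-expansion coefficients to the $L^2$-expansion coefficients, via $\langle e_{k+1},v\rangle=\lambda_{k+1}\int_\Omega e_{k+1}v\,dx$ from~\eqref{eigenfunctioninX}, is the other point to handle with care but is routine.
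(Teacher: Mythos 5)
Your proposal is correct and follows essentially the same route as the paper: orthogonality comes from the relations already established in Proposition~\ref{propde}, completeness is obtained from the divergence of the eigenvalues via the variational characterization~\eqref{kautovalore} (your ``$\bigcap_k E_{k+1}=\{0\}$'' is exactly the paper's claim~\eqref{claimbaseX}), the basis property in $X(\Omega)$ is settled by the standard Bessel/partial-sum argument (the paper's Appendix~\ref{appendixFourier}), and the $L^2(\Omega)$ basis property follows by density of $C^\infty_0(\Omega)$ plus the embedding of Proposition~\ref{embeddingsXOmega}. The only (immaterial) difference is the order in which the two basis claims are derived.
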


\begin{proof}
The orthogonality of the eigenfunctions in~$L^2(\Omega)$ and~$X(\Omega)$
is a consequence of the claim in~\eqref{ftygubhnj}. Moreover, 
up to a renormalization, 
we can suppose that~$\|e_k\|_{L^2(\Omega)}=1$ for any~$k\in \N$. Thus, from now on we focus on the proof of the fact that the sequence of eigenvalues~$e_k$ is a basis for both~$X(\Omega)$ and~$L^2(\Omega)$.

We start by showing that the sequence~$e_k$ is a basis of~$X(\Omega)$. To this end, we claim that
\begin{equation}\label{claimbaseX}
\mbox{if } v\in X(\Omega)\mbox{ is such that }\langle v, e_k\rangle=0 \mbox{ for any } k\in\N, \mbox{ then } v\equiv 0.
\end{equation}
Suppose by contradiction that there exists~$v\in X(\Omega)$, with~$v\not\equiv 0$, such that
\begin{equation}\label{xdrfvgtygbnhjuj}
\langle v, e_k\rangle=0 \mbox{ for any } k\in\N.
\end{equation} 
Since~$v\not \equiv 0$, up to a normalization we can assume that~$\|v\|_{L^2(\Omega)}=1$. Thus, by~\eqref{unboundedeigenvalues} and~\eqref{kautovalore} there exists~$k\in \N$ sufficiently large such that
\[
\frac{\displaystyle \int_{[0,1]}[v]_s^2\,d\mu^+(s) -\int_{[0, \overline s)} [v]^2_s \, d\mu^-(s)}{ \displaystyle\|v\|_{L^2(\Omega)}}
<\lambda_{k+1}.
\]
Hence, $v$ does not belong to~$E_{k+1}$. Therefore, by~\eqref{Ek+1definition}, there exists~$j\in \{1,\dots,k\}$ such that~$\langle v, e_j\rangle\ne 0$, which is in contradiction with~\eqref{xdrfvgtygbnhjuj}, and the proof of~\eqref{claimbaseX} is complete.

To establish the fact that
\begin{equation}\label{baseX}
e_k \mbox{ is a basis of } X(\Omega)
\end{equation}
we need to employ standard Fourier analysis. We discuss the details of this construction in Appendix~\ref{appendixFourier}.

We now show that~$e_k$ is a basis for~$L^2(\Omega)$. To this end, let~$v\in L^2(\Omega)$ and, for any~$\varepsilon>0$, let~$v_\varepsilon\in 
C^\infty_0(\Omega)$ be such that
\begin{equation}\label{xdfcvgyuhnjko}
\|v_\varepsilon-v\|_{L^2(\Omega)}\leq \varepsilon.
\end{equation}

We observe that~$v_\varepsilon\in X(\Omega)$. Indeed, using Proposition~\ref{propnormeequiv} and Lemma~\ref{nons} with~$s_1:=s$ and~$s_2:=1$, we get that
\[
\|v_\varepsilon\|^2\le \|v_\varepsilon\|_X^2=\int_{[0,1]}[v_\varepsilon]_s^2\,d\mu^+(s)\le C\int_{[0,1]}[v_\varepsilon]_1^2\,d\mu^+(s)
=C\|\nabla v_\varepsilon\|_{L^2(\Omega)}^2\mu^+\big([0,1]\big),
\] for some~$C>0$ depending only on~$N$ and~$\Omega$.

Thus, by~\eqref{baseX}, there exist~$k_\varepsilon\in\N$ and~$w_\varepsilon \in \mbox{span}\{e_1,\dots,e_{k_\varepsilon}\}$ such that
\[
\|v_\varepsilon-w_\varepsilon\|_X\leq \varepsilon.
\]
This and Proposition~\ref{embeddingsXOmega} give that
\begin{equation}\label{xdfcvgyuhnjkoxdftyujk}
\|v_\varepsilon-w_\varepsilon\|_{L^2(\Omega)}
\leq C\|v_\varepsilon-w_\varepsilon\|_X
\leq C\varepsilon
\end{equation}
for some constant~$C>0$.

Combining~\eqref{xdfcvgyuhnjko} and~\eqref{xdfcvgyuhnjkoxdftyujk}
we obtain that
\[
\|v-w_\varepsilon\|_{L^2(\Omega)}
\leq \|v_\varepsilon-v\|_{L^2(\Omega)}+\|v_\varepsilon-w_\varepsilon\|_{L^2(\Omega)}\leq (C+1)\varepsilon.
\]
{F}rom this we infer that~$e_k$ forms a basis of~$L^2(\Omega)$, as desired.
\end{proof}

An additional property is that any eigenvalue of problem~\eqref{eigenvalueproblem} has finite multiplicity, according to the following statement:

\begin{proposition}\label{propg}
Let~$\mu$ satisfy~\eqref{mu0}, \eqref{mu1} and~\eqref{mu2}. Take $N>2\overline s$.
Let~$\lambda_k$ be the sequence of eigenvalues given by Proposition~\ref{propde}. 

Then, each eigenvalue~$\lambda_k$ has finite multiplicity, namely, if there exists~$h\in \N$ such that
\begin{equation}\label{lalmbdakh}
\lambda_{k-1}<\lambda_k=\cdots =\lambda_{k+h}<\lambda_{k+h+1},
\end{equation}
then each eigenfunction corresponding to~$\lambda_k$ belongs to~$\mbox{span}\{e_k,\dots,e_{k+h}\}$.
\end{proposition}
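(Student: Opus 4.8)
\textbf{Proof plan for Proposition~\ref{propg}.}
The plan is to argue by contradiction. Suppose that~\eqref{lalmbdakh} holds and that there is an eigenfunction~$e$ corresponding to~$\lambda_k$ which does not lie in~$\mbox{span}\{e_k,\dots,e_{k+h}\}$. Up to subtracting its projection onto this span (which is itself an eigenfunction for~$\lambda_k$, since all the~$e_j$ with~$k\le j\le k+h$ are) and normalizing, we may assume that~$\|e\|_{L^2(\Omega)}=1$ and that~$e$ is orthogonal in~$L^2(\Omega)$ to~$e_k,\dots,e_{k+h}$. The goal is to show this forces~$e$ to be orthogonal, in the scalar product~$\langle\cdot,\cdot\rangle$, to \emph{all} of the~$e_j$, and then to derive a contradiction with~\eqref{claimbaseX} (equivalently, with~$e\not\equiv0$ and Proposition~\ref{propf}).

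First I would record, as in the proof of Proposition~\ref{propde}, that distinct eigenvalues give vectors orthogonal both in~$L^2(\Omega)$ and with respect to~$\langle\cdot,\cdot\rangle$: this is Lemma~\ref{lemmino2}. Hence for every~$j$ with~$\lambda_j\ne\lambda_k$ we automatically have~$\langle e,e_j\rangle=0$ and~$\int_\Omega e\,e_j=0$. It remains to handle the~$j$'s with~$\lambda_j=\lambda_k$, i.e.~$j\in\{k,\dots,k+h\}$ (recall~\eqref{lalmbdakh} says these are exactly the indices where the value~$\lambda_k$ is attained). For these, $\int_\Omega e\,e_j=0$ holds by our normalization of~$e$. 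To get~$\langle e,e_j\rangle=0$ for these~$j$ as well, I would test the weak formulation~\eqref{weakeigenvalue} for the eigenfunction~$e$ (with eigenvalue~$\lambda_k$) against~$e_j$: this gives~$\langle e,e_j\rangle=\lambda_k\int_\Omega e\,e_j\,dx=0$. Combining the two cases, $\langle e,e_j\rangle=0$ for \emph{every}~$j\in\N$.

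Now apply~\eqref{claimbaseX} from the proof of Proposition~\ref{propf}: if~$v\in X(\Omega)$ satisfies~$\langle v,e_j\rangle=0$ for all~$j$, then~$v\equiv0$. Thus~$e\equiv0$, contradicting~$\|e\|_{L^2(\Omega)}=1$. This contradiction shows every eigenfunction for~$\lambda_k$ lies in~$\mbox{span}\{e_k,\dots,e_{k+h}\}$, so the multiplicity of~$\lambda_k$ is at most~$h+1<+\infty$, as claimed.

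I do not expect a serious obstacle here: all the ingredients — $L^2$- and~$\langle\cdot,\cdot\rangle$-orthogonality of eigenfunctions for distinct eigenvalues (Lemma~\ref{lemmino2}), the relation~$\langle e,e_j\rangle=\lambda\int_\Omega e\,e_j$ coming from the weak formulation, and the completeness statement~\eqref{claimbaseX} — are already in place. The only mild point of care is the bookkeeping in~\eqref{lalmbdakh}: one must make sure that the indices~$j$ with~$\lambda_j=\lambda_k$ are precisely~$\{k,\dots,k+h\}$ (which is exactly what the strict inequalities~$\lambda_{k-1}<\lambda_k$ and~$\lambda_{k+h}<\lambda_{k+h+1}$ encode, together with monotonicity~\eqref{unboundedsequence}), so that splitting the index set into ``$\lambda_j\ne\lambda_k$'' and ``$j\in\{k,\dots,k+h\}$'' is exhaustive. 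Once that is noted, the argument is a short contradiction.
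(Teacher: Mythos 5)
Your argument is correct, but it is genuinely different from the one in the paper. The paper decomposes an eigenfunction $\varphi$ as $\varphi_1+\varphi_2$ with $\varphi_1\in S:=\mbox{span}\{e_k,\dots,e_{k+h}\}$ and $\varphi_2\in S^\perp$ (orthogonal complement in $X(\Omega)$ for $\langle\cdot,\cdot\rangle$), proves Pythagoras-type identities for both $\|\cdot\|$ and $\|\cdot\|_{L^2(\Omega)}$, checks that $\varphi_2\in E_{k+h+1}$, and then derives a contradiction directly from the variational characterization~\eqref{kautovalore}: a nonzero $\varphi_2\in E_{k+h+1}$ would have Rayleigh quotient at least $\lambda_{k+h+1}>\lambda_k$, while the identities force it to equal $\lambda_k$. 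You instead subtract the $L^2$-projection onto $S$, observe that the (nonzero) residual is still an eigenfunction for $\lambda_k$, show via Lemma~\ref{lemmino2} (for indices with $\lambda_j\ne\lambda_k$) and the weak formulation~\eqref{weakeigenvalue} tested against $e_j$ (for $j\in\{k,\dots,k+h\}$, where $\int_\Omega e\,e_j=0$ by construction) that it is $\langle\cdot,\cdot\rangle$-orthogonal to every $e_j$, and conclude by the completeness statement~\eqref{claimbaseX}. This is legitimate and shorter: \eqref{claimbaseX} is established in Proposition~\ref{propf}, which precedes Proposition~\ref{propg}, so there is no circularity; and your index bookkeeping (that $\{j:\lambda_j=\lambda_k\}=\{k,\dots,k+h\}$ by~\eqref{unboundedsequence} and~\eqref{lalmbdakh}) is the right point to flag. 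The trade-off is mainly one of packaging: the proof of~\eqref{claimbaseX} itself rests on the same variational comparison with~\eqref{kautovalore} that the paper invokes directly, so the two arguments share the same root; yours outsources the Rayleigh-quotient step to Proposition~\ref{propf} and thereby avoids the explicit Pythagoras computations, while the paper's version keeps the multiplicity proof self-contained modulo only the orthogonality (not the completeness) of the $e_j$.
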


\begin{proof}
Let~$h\in\N$ such that~\eqref{lalmbdakh} holds. By Proposition~\ref{propde} we infer that any element in the~$\mbox{span}\{e_k,\dots,e_{k+h}\}$ is an eigenfunction corresponding to the eigenvalue~$\lambda_k =\dots =\lambda_{k+h}$.

Thus, it only remains to show that 
\begin{equation}\label{anyeigenfunctioninspan}
\begin{split}
&\mbox{any eigenfunction~$\varphi\not\equiv 0$ corresponding to the eigenvalue~$\lambda_k =\dots =\lambda_{k+h}$}\\
&\mbox{belongs to $\mbox{span}\{e_k,\dots,e_{k+h}\}$.}
\end{split}
\end{equation}
For this, let~$S:=\mbox{span}\{e_k,\dots,e_{k+h}\}$. Moreover, let $S^\perp$ denote the orthogonal complement of $S$ in $X(\Omega)$ with respect to the scalar product introduced in \eqref{scalare+-}, namely
\[
S^\perp:=\big\{u\in X(\Omega) \;{\mbox{ s.t. }}\; \langle u, e\rangle=0 \ \mbox{ for any } e\in S\big\}.
\]
Thus, we have that~$X(\Omega)= S\oplus S^\perp$
and, since~$\varphi\in X(\Omega)$, we can write
\begin{align}\label{defvarphi12}
&\varphi:=\varphi_1 + \varphi_2 \quad \mbox{ with } \ \varphi_1\in S, \quad \varphi_2\in S^\perp \\ \label{scalarproductvarphi12}
\mbox{ and }\quad &\langle \varphi_1, \varphi_2\rangle =0.
\end{align}

We point out that if we show that
\begin{equation}\label{varphi20}
\varphi_2 \equiv 0
\end{equation}
then the claim in~\eqref{anyeigenfunctioninspan} plainly follows.
Hence, we now focus on the proof of~\eqref{varphi20}.

To start with, we test~\eqref{weakeigenvalue} for~$\varphi$ against itself and use~\eqref{defvarphi12}, \eqref{scalarproductvarphi12} and~\eqref{norma+-} to find that
\begin{equation}\label{guyhijokp}
\lambda_k \|\varphi\|^2_{L^2(\Omega)} = \|\varphi\|^2 = \|\varphi_1\|^2 + \|\varphi_2\|^2.
\end{equation}

We also claim that
\begin{equation}\label{claimvarphiL2}
\|\varphi\|^2_{L^2(\Omega)} = \|\varphi_1\|^2_{L^2(\Omega)} + \|\varphi_2\|^2_{L^2(\Omega)}.
\end{equation}
Indeed, since~$\varphi_1\in S$, we have that~$\varphi_1$ is an eigenfunction associated with the eigenvalue~$\lambda_k =\dots =\lambda_{k+h}$. Hence, by testing~\eqref{weakeigenvalue} for~$\varphi_1$ against~$\varphi_2$ and using~\eqref{scalarproductvarphi12}, we see that
\[
\lambda_k\int_\Omega \varphi_1(x) \varphi_2(x) \,dx = \langle\varphi_1, \varphi_2\rangle=0.
\]
Accordingly, since~$\lambda_k>0$ (recall~\eqref{unboundedsequence}), we get that
\[
\int_\Omega \varphi_1(x) \varphi_2(x) \,dx=0.
\]
{F}rom this and~\eqref{defvarphi12} we infer that
\[
\|\varphi\|^2_{L^2(\Omega)} = \|\varphi_1 +\varphi_2\|^2_{L^2(\Omega)} = \|\varphi_1\|^2_{L^2(\Omega)} + \|\varphi_2\|^2_{L^2(\Omega)},
\]
which is the desired claim in~\eqref{claimvarphiL2}.

Now, by~\eqref{defvarphi12}, we have that for any~$j\in\{k, \dots, k+h\}$ there exists~$c_j\in\R$ such that
\[
\varphi_1 (x) = \sum_{j=k}^{k+h} c_j e_j(x).
\]
{F}rom this, \eqref{lalmbdakh}, \eqref{norma+-}
and Proposition~\ref{propf}, we obtain that
\begin{equation}\label{tguyhijokp}
\begin{split}&
\|\varphi_1\|^2 = \sum_{j=k}^{k+h} c^2_j \,\|e_j\|^2 = \sum_{j=k}^{k+h} c^2_j \lambda_j =\lambda_k \sum_{j=k}^{k+h} c^2_j\\
&\qquad = \lambda_k \sum_{j=k}^{k+h} c^2_j
\|e_j\|^2_{L^2(\Omega)}
= \lambda_k \|\varphi_1\|^2_{L^2(\Omega)}.
\end{split}
\end{equation}

Moreover, we observe that~$\varphi_2$ is an eigenfunction corresponding to the eigenvalue~$\lambda_k$. Indeed, by~\eqref{guyhijokp}, \eqref{claimvarphiL2} and~\eqref{tguyhijokp} we get
\begin{eqnarray*}
0&=& \|\varphi_1\|^2 - \lambda_k \|\varphi_1\|^2_{L^2(\Omega)}\\&
=&\lambda_k \|\varphi\|^2_{L^2(\Omega)} - \|\varphi_2\|^2-\lambda_k \|\varphi_1\|^2_{L^2(\Omega)}\\&
=& \lambda_k \|\varphi_2\|^2_{L^2(\Omega)} - \|\varphi_2\|^2,
\end{eqnarray*}
as desired.

{F}rom this fact, \eqref{lalmbdakh} and Lemma~\ref{lemmino2}, it follows that
\[
\langle \varphi_2, e_j\rangle = 0 \quad\mbox{ for any } j\in\{1, \dots, k-1\}.
\]
Thus, combining this and~\eqref{scalarproductvarphi12}, and recalling~\eqref{Ek+1definition}, we deduce that~$\varphi_2 \in E_{k+h+1}$.

Now, we suppose by contradiction that the claim in~\eqref{varphi20} do not hold, that is~$\varphi_2\not\equiv 0$. Therefore, by~\eqref{kautovalore} and~\eqref{lalmbdakh}, and since~$\varphi_2 \in E_{k+h+1}$, we see that
\begin{eqnarray*}&&
\lambda_k <\lambda_{k+h+1} = \min_{u\in E_{k+h+1}\setminus \{0\}} \dfrac{\displaystyle\int_{[0, 1]} [u]^2_s \, d\mu^+(s)-\int_{[0, \overline s)} [u]^2_s \, d\mu^-(s)}{\displaystyle\int_\Omega |u(x)|^2 \,dx}\\&&\qquad
\le \ \dfrac{\displaystyle\int_{[0, 1]} [\varphi_2]^2_s \, d\mu^+(s)-\int_{[0, \overline s)} [\varphi_2]^2_s \, d\mu^-(s)}{\displaystyle\int_\Omega |\varphi_2(x)|^2 \,dx},
\end{eqnarray*}
that is
\begin{equation}\label{tyguhijnkm}
\|\varphi_2\|^2 > \lambda_k \|\varphi_2\|^2_{L^2(\Omega)}.
\end{equation}
Thus, by~\eqref{guyhijokp}, \eqref{claimvarphiL2}, \eqref{tguyhijokp} and~\eqref{tyguhijnkm}, we infer that
\[
\begin{split}
\lambda_k \|\varphi\|^2_{L^2(\Omega)} &= \|\varphi_1\|^2 + \|\varphi_2\|^2\\
& >\lambda_k \left(\|\varphi_1\|^2_{L^2(\Omega)} + \|\varphi_2\|^2_{L^2(\Omega)}\right)\\
&= \lambda_k \|\varphi\|^2_{L^2(\Omega)},
\end{split}
\]
which is a contradiction.  Hence, \eqref{varphi20} holds and then the desired claim in~\eqref{anyeigenfunctioninspan} is proved.
\end{proof}

We are now ready to prove Theorem~\ref{thmautovalori}:

\begin{proof}[Proof of Theorem~\ref{thmautovalori}]
The desired results plainly follow combining Propositions~\ref{propde}, \ref{propf} and~\ref{propg}.
\end{proof}

\begin{appendix}

\section{Proof of~\eqref{counterexamplemaxprinciple}}\label{appendixmaxprinciple}

In this appendix we provide an explicit counterexample to the maximum principle
which establishes~\eqref{counterexamplemaxprinciple}.

Let~$\Omega:=(-1, 1)$, $s\in (0, 1)$, $\alpha>0$, $R>0$ and~$u_R:\R\to\R$ be the function defined as
\[
u_R(x):=\begin{cases}
x^2-1 &\mbox{ for } |x|\le 1+R,\\
(1+R)^2-1 &\mbox{ for } |x|> 1+R.
\end{cases}
\]
Notice that~$u_R\in C(\R)\cap C^2(\Omega)$ and that~$u_R\ge0$ in~$\R\setminus(-1,1)$ and~$u_R<0$ in~$(-1,1)$.

We now check that, if~$R$ is sufficiently large,
\begin{equation}\label{claimprincipale}
-\Delta u_R - \alpha(-\Delta)^s u_R \ge 0\quad\mbox{ in }\Omega .
\end{equation}
We observe that, since~$-\Delta u_R = -2$ in~$\Omega$,  to obtain the desired result~\eqref{claimprincipale} we only need to prove that,
if~$R$ is sufficiently large,
\begin{equation}\label{claimfrazionario}
- (-\Delta)^s u_R \ge \frac{2}{\alpha} \quad\mbox{ in } \Omega .
\end{equation}

We also notice that, in this setting, the space~$C_\mu(\R)$ in~\eqref{inteGRAeiuwot2} boils down to
$$ \left\{u\in C(\R) \mbox{ such that }\int_{\R}\frac{|u(x)|}{1+|x|^{1+2s}}\,dx<+\infty \right\}$$
and the function~$u_R$ belongs to this space.

Hence, in light of Proposition~\ref{propcompute}, we can compute~$(-\Delta)^s u_R(x)$ pointwise. Therefore, we write
\begin{equation}\label{I1I2I3}
\begin{split}
&(-\Delta)^s u_R(x) =I_1 +I_2 +I_3,\\{\mbox{where }} \qquad&
I_1:=\int_{-\infty}^{-1-R} \frac{x^2 -(1+R)^2}{|x-y|^{1+2s}}\, dy,\\&
I_2:=\int_{1+R}^{+\infty} \frac{x^2 -(1+R)^2}{|x-y|^{1+2s}} \,dy \\
{\mbox{and }}\qquad &I_3:= PV \int_{-1-R}^{1+R} \frac{x^2 -y^2}{|x-y|^{1+2s}}\, dy.
\end{split}
\end{equation}
We compute separately~$ I_1$, $I_2$ and~$I_3$. We have
\begin{equation}\label{A3BIS}
I_1 = (x^2 -(1+R)^2) \int_{-\infty}^{-1-R} \frac{dy}{(x-y)^{1+2s}} = -\frac{(1+R-x)(1+R+x)^{1-2s}}{2s}
\end{equation}
and
\begin{equation}\label{A3TER}
I_2 = (x^2 -(1+R)^2) \int_{1+R}^{+\infty} \frac{dy}{(y-x)^{1+2s}} = -\frac{(1+R+x)(1+R-x)^{1-2s}}{2s}.
\end{equation}

Moreover, for~$\varepsilon\in(0,1)$ sufficiently small,
\begin{eqnarray*}
&&\int_{-1-R}^{x-\varepsilon} \frac{x+y}{(x-y)^{2s}} \,dy - \int_{x+\varepsilon}^{1+R} \frac{x+y}{(y-x)^{2s}}\, dy\\&=&
x\left(\;\int_{-1-R}^{x-\varepsilon} \frac{dy}{(x-y)^{2s}} - \int_{x+\varepsilon}^{1+R} \frac{dy}{(y-x)^{2s}}\right)
+\int_{-1-R}^{x-\varepsilon} \frac{y}{(x-y)^{2s}}\, dy - \int_{x+\varepsilon}^{1+R} \frac{y}{(y-x)^{2s}} \,dy\\&=&
2x\left(\;\int_{-1-R}^{x-\varepsilon} \frac{dy}{(x-y)^{2s}} - \int_{x+\varepsilon}^{1+R} \frac{dy}{(y-x)^{2s}}\right)
-\int_{-1-R}^{x-\varepsilon} (x-y)^{1-2s}\, dy - \int_{x+\varepsilon}^{1+R}(y-x)^{1-2s} \,dy.
\end{eqnarray*}
As a result, if~$s\neq\frac12$ we have that
\begin{equation}\label{Arquattro}\begin{split}
I_3&= \lim_{\varepsilon\searrow 0}\left(\;
\int_{-1-R}^{x-\varepsilon} \frac{x+y}{(x-y)^{2s}} \,dy - \int_{x+\varepsilon}^{1+R} \frac{x+y}{(y-x)^{2s}}\, dy\right)\\&=  \lim_{\varepsilon\searrow 0}\left[
\frac{2x}{1-2s}\Big( (1+R+x)^{1-2s}- (1+R-x)^{1-2s} \Big)\right.
\\&\qquad\left.-\frac{1}{2-2s}\Big( (1+R+x)^{2-2s}+ (1+R-x)^{2-2s}-2\varepsilon^{2-2s}
\Big)\right]\\&=
\frac{2x}{1-2s}\Big( (1+R+x)^{1-2s}- (1+R-x)^{1-2s} \Big)\\&\qquad-\frac{1}{2-2s}\Big( (1+R+x)^{2-2s}+ (1+R-x)^{2-2s} \Big),
\end{split}\end{equation}
while if~$s=\frac12$ we see that
\begin{equation}\label{7382trgsakfkjerutrjke}\begin{split}
I_3&= \lim_{\varepsilon\searrow 0}\left(\;
\int_{-1-R}^{x-\varepsilon} \frac{x+y}{x-y} \,dy - \int_{x+\varepsilon}^{1+R} \frac{x+y}{y-x}\, dy\right)\\&=  \lim_{\varepsilon\searrow 0}\left[
2x\log\left(\frac{1+R+x}{1+R-x}\right) -2-2R+2\varepsilon
\right]\\&=2x\log\left(\frac{1+R+x}{1+R-x}\right) -2-2R.
\end{split}\end{equation}

Thus, by~\eqref{I1I2I3}, \eqref{A3BIS}, \eqref{A3TER} and~\eqref{Arquattro}, we obtain that, if~$s\neq\frac12$, for any~$x\in \Omega$,
\[
\begin{split}
(-\Delta)^s u_R(x)&=-\frac1{2s}\Big( (1+R-x)(1+R+x)^{1-2s}+(1+R+x)(1+R-x)^{1-2s}\Big)\\&\qquad +
\frac{2x}{1-2s}\Big( (1+R+x)^{1-2s}- (1+R-x)^{1-2s} \Big) \\&\qquad-\frac{1}{2-2s}\Big( (1+R+x)^{2-2s}+ (1+R-x)^{2-2s} \Big)\\
&=:g_{R,s}(x).
\end{split} 
\]
We observe that~$g_{R,s}$ is even and has a maximum at~$x=1$ in~$[0,1]$.
Therefore, in order to prove~\eqref{claimfrazionario}, we only need
to show that
\begin{equation}\label{claimgs}
g_{R,s}(1)\leq -\frac{2}{\alpha}.
\end{equation}
To this end, we write
\begin{eqnarray*}
g_{R,s}(1)&=&-\frac1{2s}\Big( R(2+R)^{1-2s}+(2+R)R^{1-2s}\Big)\\&&\qquad +
\frac{2}{1-2s}\Big( (2+R)^{1-2s}- R^{1-2s} \Big)  -\frac{1}{2-2s}\Big( (2+R)^{2-2s}+ R^{2-2s} \Big)
\\&=&
R(2+R)^{1-2s}\left(-\frac{1}{2s}+\frac{2}{(1-2s)R}-\frac{1}{(1-s)R}-\frac{1}{2-2s}\right)\\&&\qquad
+R^{2-2s}\left(-\frac{1}{sR}-\frac{1}{2s}-\frac{2}{(1-2s)R}-\frac{1}{2-2s}\right).
\end{eqnarray*}
We point out that
\[
\lim_{R\to +\infty}g_{R,s}(1)=-\infty.
\]
{F}rom this, we infer that, for any $\alpha>0$, there exists $R_0>0$ such that
\[
g_{R_0,s}(1)\leq -\frac{2}{\alpha}.
\]
This proves the claim in~\eqref{claimfrazionario}, and thus~\eqref{claimprincipale} holds in the case~$s\neq 1/2$.

When~$s=1/2$, by~\eqref{I1I2I3}, \eqref{A3BIS}, \eqref{A3TER} and~\eqref{7382trgsakfkjerutrjke}, we find that
\[
(-\Delta)^{\frac12} u_R(x)=2x \log\left(\frac{1+R+x}{1+R-x}\right) -4-4R.
\]
We observe that the function
\[
h_R(x):=2x \log\left(\frac{1+R+x}{1+R-x}\right) -4-4R
\]
is even and has a maximum at~$x=1$ in~$[0,1]$. Moreover, 
\[
\lim_{R\to +\infty}h_R(1) =\lim_{R\to +\infty}2 \log\left(\frac{2+R}{R}\right) -4-4R=-\infty.
\]
Thus, for any $\alpha>0$ there exists $R_0>0$ such that
\[
h_{R_0}(1)\leq -\frac{2}{\alpha}.
\]
This proves the claim in~\eqref{claimfrazionario} and then~\eqref{claimprincipale} is established also in the case~$s=1/2$. 

The proof of~\eqref{counterexamplemaxprinciple} is thereby complete.

\section{Proof of~\eqref{baseX}}\label{appendixFourier}
In this appendix we prove that the sequence of eigenfunctions~$e_k$ defined in Proposition~\ref{propde} is a basis for the space~$X(\Omega)$.

To this aim, let~$f\in X(\Omega)$. For any~$i$, $j\in \N$, we define
\[
\widetilde{e_i}:=\frac{e_i}{\|e_i\|}
\qquad \mbox{and} \qquad
f_j:=\sum_{i=1}^j\widetilde{e_i}\,\langle f, \widetilde{e_i}\rangle.
\]
Observe that, for any~$j\in \N$,
\begin{equation}\label{xdrtgvbhuijn}
f_j\in \mbox{span}\{e_1,\dots, e_j\}.
\end{equation}
Let~$v_j:=f-f_j$. Then, by the orthonormality of~$\widetilde{e_k}$ in~$X(\Omega)$ with respect to the norm defined in~\eqref{norma+-} (see Proposition~\ref{propnormeequiv}), we get that
\[
0\leq \|v_j\|^2=\|f\|^2+\|f_j\|^2-2\langle f, f_j\rangle=\|f\|^2 -\sum_{i=1}^j\langle f, \widetilde{e_i}\rangle^2.
\]
Thus, for any~$j\in \N$,
\[
\sum_{i=1}^j\langle f, \widetilde{e_i}\rangle^2 \leq \|f\|^2.
\]
This implies that 
\[
\sum_{i=1}^{+\infty}\langle f, \widetilde{e_i}\rangle^2 \mbox{ is a convergent series}.
\]
Hence, we define 
\[
\tau_j:=\sum_{i=1}^j\langle f, \widetilde{e_i}\rangle^2
\]
and we have that
\begin{equation}\label{zsedxcftgvbhju}
\tau_j \mbox{ is a Cauchy sequence in }\R.
\end{equation}

Moreover, again by the orthonormality of~$\widetilde{e_k}$ in~$X(\Omega)$, if~$l>j$, we see that
\[
\|v_l-v_j\|^2=\left\|\sum_{i=j+1}^l\widetilde{e_i} \langle f, \widetilde{e_i}\rangle\right\|^2=\sum_{i=j+1}^l\langle f, \widetilde{e_i}\rangle^2=\tau_l-\tau_j.
\]
This and~\eqref{zsedxcftgvbhju} imply that~$v_j$ is a Cauchy sequence in~$X(\Omega)$. By~\eqref{XHilbert},
\begin{equation}\label{zcvgtgbhuimkl}
\mbox{$v_j$ converges strongly to some $v$ in $X(\Omega)$.}
\end{equation}

We observe that, for any~$j\geq k$,
\[
\langle v_j, \widetilde{e_k}\rangle
=\langle f, \widetilde{e_k}\rangle-\langle f_j, \widetilde{e_k}\rangle
=\langle f, \widetilde{e_k}\rangle-\langle f, \widetilde{e_k}\rangle=0.
\]
Taking the limit as~$j$ goes to infinity, by~\eqref{zcvgtgbhuimkl}
we infer that~$\langle v, \widetilde{e_k}\rangle=0$ for any~$k \in \N$. Thus,
the statement in~\eqref{claimbaseX} implies that~$v\equiv 0$.

Recalling that~$f_j=f-v_j$, we thereby obtain that
\[
\mbox{$f_j$ converges strongly to $f$ in $X(\Omega)$.}
\]
This, together with~\eqref{xdrtgvbhuijn}, gives~\eqref{baseX},
as desired.

\end{appendix}

\section*{Acknowledgements} 
SD, CS and EV are members of the Australian Mathematical Society (AustMS). CS and EPL are members of the INdAM--GNAMPA.

CS acknowledges the support of the Juan de la Cierva Fellowship (grant number JDC2023-
050365-I).

This work has been supported by the Australian Laureate Fellowship FL190100081
and by the Australian Future Fellowship
FT230100333.

\vfill

\end{document}